\documentclass[a4paper,11pt,reqno,twoside]{amsart}
\usepackage{GL_n_amplification}
%..........................................
\begin{document}%
%...............................................................................................
\section{Introduction and statement of the results}%
%...............................................................................................
\subsection{Motivation}%
%...............................................................................................
\subsubsection{The general philosophy of the amplification method}%
The amplification method was set up by W.~Duke, J.~Friedlander and H.~Iwaniec (see \cite{MR1137896}, \cite{MR1166510} and \cite{MR1258904} for example).
\par
When bounding say a complex number $z$, which satisfies for obvious reasons depending on the context
\begin{equation}\label{eq_trivial}
\abs{z}\leq M
\end{equation}
for some positive real number $M$ but, which is expected to satisfy
\begin{equation}\label{eq_non_trivial}
\abs{z}\leq M^{1-\delta}
\end{equation}
for some $0<\delta<1$, it is sometimes profitable to include\footnote{Note that choosing a family containing $z$ may be highly non-trivial. In particular, it should be large enough in order to be able to use the powerful tools of harmonic analysis but not too large such that bounding a moment of small order, like the second one, has a chance to be successful.} $z$ in a finite family of complex numbers of the same nature, say
\begin{equation*}
z=z_{j_0}\in\left\{z_j, j\in J\right\}\coloneqq\mathcal{Z}_J
\end{equation*}
where $J$ is a finite set of cardinality $\asymp M$, $j_0\in J$ is the index of our favourite complex number $z$ and to estimate all the quantities occuring in this family on average.
\par
For instance, one can try to bound the second moment of this family given by
\begin{equation*}
M_2\left(\mathcal{Z}_J\right)\coloneqq\sum_{j\in J}\abs{z_j}^2.
\end{equation*}
By \eqref{eq_trivial}, the second moment satisfies
\begin{equation*}
M_2\left(\mathcal{Z}_J\right)\leq \abs{J}M^2,
\end{equation*}
which does not help us to prove \eqref{eq_non_trivial} by positivity.
\par
One can try to bound instead an amplified second moment given by
\begin{equation*}
\mathcal{M}_2\left(\mathcal{Z}_J,\overrightarrow{\alpha}\right)\coloneqq\sum_{j\in J}\left\vert M_j(\overrightarrow{\alpha})\right\vert^2\abs{z_j}^2
\end{equation*}
where $M_j\left(\overrightarrow{\alpha}\right)$ is a short Dirichlet polynomial given by
\begin{equation*}
M_j\left(\overrightarrow{\alpha}\right)\coloneqq\sum_{i\in I}\alpha_ia_j(i)
\end{equation*}
for $j\in J$ and where $I$ is a small finite set. Here, $\overrightarrow{\alpha}=\left(\alpha_i\right)_{i\in I}$ is a finite sequence of complex numbers, which will be specified later on, and $\left(a_j(i)\right)_{i\in I}$ are some complex numbers naturally related to $z_j$ for $j\in J$. In practice, the currently known techniques enable us to prove
\begin{equation}\label{eq_bound_amplified}
\mathcal{M}_2\left(\mathcal{Z}_J,\overrightarrow{\alpha}\right)\leq M^\epsilon\left(M^2\abs{\abs{\overrightarrow{\alpha}}}_2^2+\abs{I}^\beta\abs{\abs{\overrightarrow{\alpha}}}_1\right)
\end{equation}
for some possibly large $\beta>0$ and for all $\epsilon>0$, where as usual $\abs{\abs{\overrightarrow{\alpha}}}_1$ stands for the $L^1$-norm of $\overrightarrow{\alpha}$ and $\abs{\abs{\overrightarrow{\alpha}}}_2$ for its $L^2$-norm.
\par
The whole point of the amplification method is to choose a sequence $\overrightarrow{\alpha}$, which amplifies the contribution of the complex number $z$ in the amplified second moment $\mathcal{M}_2\left(\mathcal{Z}_J,\overrightarrow{\alpha}\right)$. More explicitely, one has to construct a sequence $\overrightarrow{\alpha}$ satisfying\footnote{Obviously one should also expect that $\left\vert M_{j}\left(\overrightarrow{\alpha}\right)\right\vert^2$ is not too large when $j\neq j_0$ in $J$ for the amplification method to be successful. This generally follows in concrete cases, at least conditionally, from a suitable version of the Riemann Hypothesis. Hopefully, one does not this in practice.} 
\begin{eqnarray}
\abs{\abs{\overrightarrow{\alpha}}}_2 & \leq & \abs{I}^\epsilon, \\
\left\vert M_{j_0}\left(\overrightarrow{\alpha}\right)\right\vert^2 & \geq & \abs{I}^\gamma
\end{eqnarray}
for some possibly small $\gamma>0$ and for all $\epsilon>0$. In general, cooking such sequence $\overrightarrow{\alpha}$ is based on the fact that some of the complex numbers $a_{j_0}(i)$, $i\in I$, cannot be small simultaneously. For such sequence, \eqref{eq_bound_amplified} entails by positivity
\begin{equation}\label{eq_fin}
\abs{z}^2=\abs{z_{j_0}}^2\leq\left(M\abs{I}\right)^\epsilon\left(\frac{M^2}{\abs{I}^\gamma}+\abs{I}^{\beta+1/2-\gamma}\right)
\end{equation}
for all $\epsilon>0$, which implies \eqref{eq_non_trivial} by an optimal choice of $\abs{I}$.
\par
The very natural first step towards the proof of \eqref{eq_bound_amplified} is to open the square and to switch the order of summation, which leads us to bounding
\begin{equation}\label{eq_expand}
\sum_{(i_1,i_2)\in I^2}\alpha_{i_1}\overline{\alpha_{i_2}}\sum_{j\in J}a_j(i_1)\overline{a_j(i_2)}\abs{z_j}^2.
\end{equation}
The diagonal term, namely the contribution from $i_1=i_2$ in the previous equation, is generally bounded by the first term in the right-hand side of \eqref{eq_fin}, whereas the non-diagonal term, namely the contribution from $i_1\neq i_2$ in the previous equation, is generally bounded by the second term in the right-hand side of \eqref{eq_fin}.
\par
Getting these bounds heavily relies in practice on linearising the products $a_j(i_1)\overline{a_j(i_2)}$ for $i_1$ and $i_2$ in $I$, namely these products can be often written in relevant cases as a linear combination of the $a_j(i)$'s. Such linearisations in the context of $GL(n)$ automorphic forms are the core of this article.
\par
In practice, the complex numbers $a_j(i)$ and $\overline{a_j(i)}$, $(i,j)\in I\times J$, are the eigenfunctions of some specific endomorphisms. Thus, linearising the products $a_j(i_1)\overline{a_j(i_2)}$ boils down to linearising the composition of these relevant endomorphisms.
%...............................................................................................
\subsubsection{The amplification method in $GL(n)$}%
Let $p$ and $q$ be two prime numbers.
\par
In the context of $GL(n)$ automorphic forms defined in Section \ref{sec_GL3}, our favourite complex number $z$ is related to a $GL(n)$ Hecke-Maa$\ss$ cusp form $f$, say $z=z(f)$. For instance, $z=f(g)$ for $g$ in the generalised upper-half plane or $z=L(f,s)$, the value at of the Godement-Jacquet $L$-function attached to $f$ on the critical line $\Re{(s)}=1/2$. 
\par
Hence $z$ can be included, with a slight abuse of notations, in a finite subset of an orthonormal basis $(f_j)_{j\geq 1}$ of $GL(n)$ Hecke-Maa$\ss$ cusp forms, namely those whose analytic conductors, the Laplace eigenvalue or the level or the imaginary part of $s$ for instance, is bounded by some parameter $Q>0$, which is devoted to tend to infinity, say
\begin{equation*}
z(f)=z(f_{j_0})\in\left\{z(f_j), j\geq 1, Q(f_j)\leq Q\right\}.
\end{equation*}
\par
In \cite{SiVe2}, the authors proved the existence of an abstract higher rank amplifier and in \cite{BlMa1}, the authors proved that there exists, at least asymptotically ($p$ large), a non-trivial linear combination of $GL(n)$ Hecke operators equal to the identity operator (see \cite[Lemma 4.2]{BlMa1}). The whole point of this work is to give a totally explicit and ready to use version of a $GL(n)$ amplifier.
\par
The choice of our amplifier $\overrightarrow{\alpha}$ relies on the fundamental identity
\begin{equation*}
a_{j_0}(p,\underbrace{1,\dots,1}_{n-2 \textnormal{ terms}})a_{j_0}(\underbrace{1,\dots,1}_{n-2 \textnormal{ terms}},p)=a_{j_0}(p,\underbrace{1,\dots,1}_{n-3 \textnormal{ terms}},p)+1,
\end{equation*}
where $a_j(m_1,\dots,m_{n-1})$ stands for the $(m_1,\dots,m_{n-1})$'th Fourier coefficient of $f_j$ (see \eqref{eq_fourier} and \cite[Theorem 9.3.11 Page 271]{MR2254662}). This identity essentially says that $a_{j_0}(p,\underbrace{1,\dots,1}_{n-2 \textnormal{ terms}})a_{j_0}(\underbrace{1,\dots,1}_{n-2 \textnormal{ terms}},p)$ and $a_{j_0}(p,\underbrace{1,\dots,1}_{n-2 \textnormal{ terms}},p)$ cannot be simultaneously small . At the level of Hecke operators, this identity reflects the fact that
\begin{equation}\label{eq_identity}
T_{\textnormal{diag}(1,\underbrace{p,\dots,p}_{n-1 \textnormal{ terms}})}\circ T_{\textnormal{diag}(\underbrace{1,\dots,1}_{n-1 \textnormal{ terms}},p)}=T_{\textnormal{diag}(1,\underbrace{p,\dots,p}_{n-2 \textnormal{ terms}},p^2)}+\frac{p^{n}-1}{p-1}\textnormal{Id},
\end{equation}
itself a consequence of the identity
\begin{multline*}
\Lambda_n\textnormal{diag}\left(\underbrace{1,\dots,1}_{n-1 \textnormal{ terms}},p\right)\Lambda_n\ast\Lambda_n\textnormal{diag}\left(1,\underbrace{p,\dots,p}_{n-1 \textnormal{ terms}}\right)\Lambda_n=\Lambda_n\textnormal{diag}\left(1,\underbrace{p,\dots,p}_{n-2 \textnormal{ terms}},p^2\right)\Lambda_n \\
+\frac{p^{n}-1}{p-1}\Lambda_n\textnormal{diag}\left(\underbrace{p,\dots,p}_{n \textnormal{ terms}}\right)\Lambda_n
\end{multline*}
at the level of $\Lambda_n$ double cosets, where $\Lambda_n\coloneqq GL_n(\Z)$ (see \cite[Lemma 2.18 Page 114]{MR1349824}).
\par
The coefficients $a_j(i)$'th will be some Hecke eigenvalues of $f_j$. More precisely, being inspired by \cite{HoRiRo2} and by \eqref{eq_identity}, we set
\begin{eqnarray*}
a_j(p) & \coloneqq & a_j(p,\underbrace{1,\dots,1}_{n-1 \textnormal{ terms}})=\textnormal{the eigenvalue of $T_p=p^{-(n-1)/2}T_{\textnormal{diag}(\underbrace{1,\dots,1}_{n-1 \textnormal{ terms}},p)}$}, \\
a_j\left(p^{2}\right) & \coloneqq & \textnormal{the eigenvalue of $p^{-(n-1)}T_{\textnormal{diag}(1,\underbrace{p,\dots,p}_{n-2 \textnormal{ terms}},p^2)}$}\in\R
\end{eqnarray*}
when acting on $f_j$ and we recall that
\begin{equation*}
\overline{a_j(p)}=\textnormal{the eigenvalue of $T_p^\ast=p^{-(n-1)/2}T_{\textnormal{diag}(1,\underbrace{p,\dots,p}_{n-1 \textnormal{ terms}})}$}
\end{equation*}
still when acting on $f_j$ (see \eqref{eq_Hecke}). Thus, $I$ is a subset of the prime numbers and of the squares of the prime numbers.
\par
A very natural candidate for a $GL(n)$ amplifier is
\begin{equation*}
M_j\left(\overrightarrow{\alpha}\right)\coloneqq\sum_{i\in I}\alpha_ia_j(i)
\end{equation*}
where
\begin{equation*}
\alpha_{i}\coloneqq\begin{cases}
\overline{a_{j_0}(p)} & \text{if $i=p\leq\sqrt{L}$ is a prime number,} \\
-1 & \text{if $i=p^2\leq L$ is the square of a prime number} \\
0 & \text{otherwise}.
\end{cases}
\end{equation*}
This amplifier satisfies, as in the $GL(2)$ and $GL(3)$ case,  $\vert M_{j_0}\left(\overrightarrow{\alpha}\right)\vert^2\gg_\epsilon L^{1-\epsilon}$ since $\vert I\vert\gg_\epsilon L^{1-\epsilon}$ for all $\epsilon>0$.
\par
Glancing at \eqref{eq_expand} and applying the inequality\footnote{Such inequality, used for the first time in the amplification method in \cite{BlHaMil}, enabled the authors to avoid mixing squares of prime numbers and prime numbers in their diophantine analysis.} 
\begin{equation*}
\left\vert M_{j_0}\left(\overrightarrow{\alpha}\right)\right\vert^2\leq 2\left\vert\sum_{p\leq\sqrt{L}}\alpha_pa_j(p)\right\vert^2+2\left\vert\sum_{p\leq\sqrt{L}}\alpha_{p^2}a_j(p^2)\right\vert^2,
\end{equation*}
it becomes crucial to linearise the products
\begin{multline*}
T_{\textnormal{diag}(1,\underbrace{p,\dots,p}_{n-1 \textnormal{ terms}})}\circ T_{\textnormal{diag}(\underbrace{1,\dots,1}_{n-1 \textnormal{ terms}},q)} \textnormal{ and } T_{\textnormal{diag}(1,\underbrace{p,\dots,p}_{n-2 \textnormal{ terms}},p^2)}\circ T_{\textnormal{diag}(1,\underbrace{q,\dots,q}_{n-2 \textnormal{ terms}},q^2)}
\end{multline*}
where $p$ and $q$ are two prime numbers. The results are given in the next section and reveal that the relevant Hecke operators when applying the amplification method in $GL(n)$ are
\begin{equation*}
T_{\textnormal{diag}(1,\underbrace{p,\dots,p}_{n-2 \textnormal{ terms}},pq)}, T_{\textnormal{diag}(1,\underbrace{pq,\dots,pq}_{n-2 \textnormal{ terms}},(pq)^2)}, T_{\textnormal{diag}(1,\underbrace{p,\dots,p}_{n-2 \textnormal{ terms}},p^2)}
\end{equation*}
and
\begin{equation*}
T_{\textnormal{diag}(1,\underbrace{p^2,\dots,p^2}_{n-3 \textnormal{ terms}},p^3,p^3)}, T_{\textnormal{diag}(1,1,\underbrace{p,\dots,p}_{n-3 \textnormal{ terms}},p^3)}, T_{\textnormal{diag}(1,1,\underbrace{p,\dots,p}_{n-4 \textnormal{ terms}},p^2,p^2)}.
\end{equation*}
%...............................................................................................
\subsection{Statement of the results}%
\begin{theoint}\label{coro_A}
Let $n\geq 4$, $\Lambda_n=GL_n(\Z)$ and $p$ be a prime number.
\begin{itemize}
\item
The finite set $R^{(n)}(p)$ of cardinality
\begin{equation*}
\textnormal{deg}\left(\textnormal{diag}\left(1,\underbrace{p,\dots,p}_{n-2 \textnormal{ terms}},p^2\right)\right)=p\frac{\left(p^{n-1}-1\right)\left(p^n-1\right)}{(p-1)^2}
\end{equation*}
defined in Proposition \ref{propo_representatives} is a complete system of representatives of the distinct $\Lambda_n$ right cosets of
\begin{equation*}
\Lambda_n\textnormal{diag}\left(1,\underbrace{p,\dots,p}_{n-2 \textnormal{ terms}},p^2\right)\Lambda_n
\end{equation*}
modulo $\Lambda_n$.
\item
The following formulas for the degrees hold.
\begin{eqnarray*}
\textnormal{deg}\left(\textnormal{diag}\left(p,\underbrace{p^2,\dots,p^2}_{n-2 \textnormal{ terms}},p^3\right)\right) & = & p\frac{\left(p^{n-1}-1\right)\left(p^n-1\right)}{(p-1)^2}, \\
\textnormal{deg}\left(\textnormal{diag}\left(1,\underbrace{p^2,\dots,p^2}_{n-3 \textnormal{ terms}},p^3,p^3\right)\right) & = & p^{n+1}\frac{\left(p^{n-2}-1\right)\left(p^{n-1}-1\right)\left(p^n-1\right)}{(p-1)^2(p^2-1)}
\end{eqnarray*}
and
\begin{eqnarray*}
\textnormal{deg}\left(\textnormal{diag}\left(1,\underbrace{p^2,\dots,p^2}_{n-2 \textnormal{ terms}},p^4\right)\right) & = & p^{2n-1}\frac{\left(p^{n-1}-1\right)\left(p^n-1\right)}{(p-1)^2}, \\
\textnormal{deg}\left(\textnormal{diag}\left(p,p,\underbrace{p^2,\dots,p^2}_{n-3 \textnormal{ terms}},p^4\right)\right) & = & p^{n+1}\frac{\left(p^{n-2}-1\right)\left(p^{n-1}-1\right)\left(p^n-1\right)}{(p-1)^2(p^2-1)}, \\
\textnormal{deg}\left(\textnormal{diag}\left(p,p,\underbrace{p^2,\dots,p^2}_{n-4 \textnormal{ terms}},p^3,p^3\right)\right) & = & p^4\frac{\left(p^{n-3}-1\right)\left(p^{n-2}-1\right)\left(p^{n-1}-1\right)\left(p^{n}-1\right)}{(p-1)^2\left(p^2-1\right)^2}.
\end{eqnarray*}
\item
Finally,
\begin{multline}\label{eq_coro_A_2}
\Lambda_n\textnormal{diag}\left(1,\underbrace{p,\dots,p}_{n-2 \textnormal{ terms}},p^2\right)\Lambda_n\ast \Lambda_n\textnormal{diag}\left(1,\underbrace{p,\dots,p}_{n-2 \textnormal{ terms}},p^2\right)\Lambda_n \\
=\frac{2p^n-p^2-2p+1}{p-1}\Lambda_n\textnormal{diag}\left(p,\underbrace{p^2,\dots,p^2}_{n-2 \textnormal{ terms}},p^3\right)\Lambda_n \\
+p\frac{\left(p^{n-1}-1\right)\left(p^n-1\right)}{(p-1)^2}\Lambda_n\textnormal{diag}\left(\underbrace{p^2,\dots,p^2}_{n \textnormal{ terms}}\right)\Lambda_n+\Lambda_n\textnormal{diag}\left(1,\underbrace{p^2,\dots,p^2}_{n-2 \textnormal{ terms}},p^4\right)\Lambda_n \\
+(p+1)\Lambda_n\textnormal{diag}\left(1,\underbrace{p^2,\dots,p^2}_{n-3 \textnormal{ terms}},p^3,p^3\right)\Lambda_n+(p+1)\Lambda_n\textnormal{diag}\left(p,p,\underbrace{p^2,\dots,p^2}_{n-3 \textnormal{ terms}},p^4\right)\Lambda_n \\
+(p+1)^2\Lambda_n\textnormal{diag}\left(p,p,\underbrace{p^2,\dots,p^2}_{n-4 \textnormal{ terms}},p^3,p^3\right)\Lambda_n.
\end{multline}
\end{itemize}
\end{theoint}
\begin{corint}\label{theo_A}
Let $n\geq 4$. If $p$ and $q$ are two prime numbers then
\begin{equation}\label{eq_lin_1}
T_{\textnormal{diag}(1,\underbrace{p,\dots,p}_{n-1 \textnormal{ terms}})}\circ T_{\textnormal{diag}(\underbrace{1,\dots,1}_{n-1 \textnormal{ terms}},q)}=T_{\textnormal{diag}(1,\underbrace{p,\dots,p}_{n-2 \textnormal{ terms}},pq)}+\delta_{p=q}\frac{p^{n}-1}{p-1}\textnormal{Id}
\end{equation}
and
\begin{multline}\label{eq_lin_2}
T_{\textnormal{diag}(1,\underbrace{p,\dots,p}_{n-2 \textnormal{ terms}},p^2)}\circ T_{\textnormal{diag}(1,\underbrace{q,\dots,q}_{n-2 \textnormal{ terms}},q^2)}=T_{\textnormal{diag}(1,\underbrace{pq,\dots,pq}_{n-2 \textnormal{ terms}},(pq)^2)} \\
+\delta_{p=q}\frac{2p^n-p^2-2p+1}{p-1}T_{\textnormal{diag}(1,\underbrace{p,\dots,p}_{n-2 \textnormal{ terms}},p^2)}+\delta_{p=q}p\frac{\left(p^{n-1}-1\right)\left(p^n-1\right)}{(p-1)^2}\textnormal{Id} \\
+\delta_{p=q}(p+1)T_{\textnormal{diag}(1,\underbrace{p^2,\dots,p^2}_{n-3 \textnormal{ terms}},p^3,p^3)}+\delta_{p=q}(p+1)T_{\textnormal{diag}(1,1,\underbrace{p,\dots,p}_{n-3 \textnormal{ terms}},p^3)} \\
+\delta_{p=q}(p+1)^2T_{\textnormal{diag}(1,1,\underbrace{p,\dots,p}_{n-4 \textnormal{ terms}},p^2,p^2)}.
\end{multline}
\end{corint}
When $p\neq q$, the previous corollary follows from \eqref{eq_milti_33} whereas when $p=q$, it comes from Theorem \ref{coro_A}, \cite[Lemma 2.18 Page 114]{MR1349824} and \eqref{eq_ex_multi}. This corollary generalizes the case $n=2$, well-known for a long time, and the case $n=3$ done in \cite{HoRiRo2}.
%...............................................................................................
\subsection{On the possible applications of this higher rank amplifier}%
%................................................................................................
\subsubsection{Subconvexity bounds for $L$-functions}%
Let $f$ be a $GL(n)$ Hecke Maa$\ss$ cusp form. A very classical problem considered by analytic number theorists is the size of the Godement-Jacquet $L$-function associated to $f$, say $L(f,s)$ with $s$ on the critical line $\Re{(s)}=1/2$ when the analytic conductor $C(f)$ of $f$ tends to infinity. The bound
\begin{equation*}
L(f,s)\ll C(f)^{1/4},
\end{equation*}  
is named the convexity or trivial bound, even if this is not a trivial result in general. Improving this bound, namely proving a subconvexity bound, was proved in the past to be useful to solve many arithmetical questions, such as equidistribution results.
\par
The $GL(2)$ case was intensively investigated in the last decades, culminating in the work of P.~Michel and A.~Venkatesh in \cite{MR2653249}, who used the amplification method in $GL(2)$. It seems that the best subconvexity bounds in the $GL(2)$ case intrinsic to the amplification method are the Weyl exponent $1/4(1-1/3)$ (\cite{Wey}) and the Burgess exponent $1/4(1-1/4)$ (\cite{MR0132733}).
\par
Very few examples of subconvexity bounds for $L$-functions of $GL(n)$ automorphic forms, which are not lifts of $GL(2)$ ones, are known. One can quote \cite{MR2753605}, \cite{MR2975240}, \cite{Mu1} in the rank $2$ case and an extremely recent and elaborate subconvexity bound for twisted $L$-functions of $GL(3)$ automorphic forms by R.~Munshi in \cite{Mu2}. As far as we know, the Weyl and Burgess exponents have never appeared in this higher rank case.
\par
We hope that the completely explicit $GL(n)$ amplifier built in this paper will sheld some new lights on these questions in the close future.
%.............................................................................................
\subsubsection{Subconvexity bounds for sup-norms of automorphic forms}%
Let $f$ be a $L^2$-normalized $GL(n)$ Hecke Maa$\ss$ cusp form.
\par
\noindent\textit{{The spectral aspect.}}
\par
Let $K$ be a fixed compact subset of $SL_n(\R)/SO_n(\R)$. The convexity bound for the sup-norm of $f$ restricted to $K$ is given by
\begin{equation*}
\abs{\abs{f\vert_{K}}}_\infty\ll\lambda_f^{n(n-1)/8}
\end{equation*}
where $\lambda_f$ is the Laplace eigenvalue of $f$. It is important to mention that F.~Brumley and N.~Templier discovered in \cite{BrTe} that this convexity bound does not hold when $n\geq 6$ if $f$ is not restricted to a compact.
\par
The convexity bound is not expected to be sharp, essentially because they are some additional symmetries on $SL_n(\R)/SO_n(\R)$: the Hecke correspondences. More precisely, one should be able to prove a subconvexity bound, namely finding an absolute positive constant $\delta_n>0$ such that
\begin{equation}\label{eq_expect}
\abs{\abs{f\vert_{K}}}_\infty\ll\lambda_f^{n(n-1)/8-\delta_n}
\end{equation}
\par
The pioneering work done by H.~Iwaniec and P.~Sarnak in \cite{MR1324136} is the bound given in \eqref{eq_expect} when $n=2$ for $\delta_2=1/24$. This constant $\delta_2$ seems to be intrinsic to the amplification method in $GL(2)$. The case $n=3$ was completed in \cite{HoRiRo}. The general case was done in a series of impressive works by V.~Blomer and P.~Maga in \cite{BlMa1} and in \cite{BlMa2}. One could also quote \cite{Ma3}.
\par
All these achievements were done thanks to the amplification method. Determining what should be the best subconvexity exponent intrinsic to the amplification method is an interesting question, which should reveal new types of analytic problems. Needless to say that the explicit $GL(n)$ amplifier could be useful to do so.
\par
\noindent\textit{{The level aspect.}}
\par
Let say that $f$ is of level $q$ and let us speak about the growth of the sup-norm of $f$ as $q$ gets large.
\par
For $GL(2)$ and when the level $q$ is squarefree, the convexity bound is
\begin{equation*}
\abs{\abs{f}}_\infty\ll q^\epsilon
\end{equation*}
for all $\epsilon>0$ but one expects that the correct order of magnitude is
\begin{equation*}
\abs{\abs{f}}_\infty\ll q^{-1/2+\epsilon}
\end{equation*}
This rank $1$ case in prime level was intensively studied during the last years after the foundational work of V.~Blomer and R.~Holowinsky in \cite{MR2587342}. See \cite{MR2734340}, \cite{MR2989618} and \cite{HeRi}. In \cite{MR3038127}, the authors proved the bound
\begin{equation*}
\abs{\abs{f}}_\infty\ll q^{-1/6+\epsilon}
\end{equation*}
which seems to be the best possible subconvexity exponent intrinsic to the amplification method. Note that the authors really used the shape of the explicit $GL(2)$ amplifier in order to get this bound. When the level $q$ is not squarefree, the situation is more delicate since the Atkin-Lehner group has more than one orbit when acting on the cusps. See \cite{Sa} and \cite{Ma4} for more details.
\par
For $GL(n)$, as far as we know, these questions remain completely open. We hope that the explicit $GL(n)$ amplifier constructed in this work will make possible an investigation of these questions in a higher rank setting.
%....................................................................................
\subsection{Organization of the paper}%
The general background on $GL(n)$ Maa$\ss$ cusp forms and on the $GL(n)$ Hecke algebra is given in Section \ref{sec_GL3}. The proof of the first bullet in Theorem \ref{coro_A} is done in Section \ref{sec_dec} (see Proposition \ref{propo_representatives}). The proof of both the formulas for the degrees given in Theorem \ref{coro_A} and equation \eqref{eq_coro_A_2} is detailed in Section \ref{sec_proof_eq}.  
%........................................................................................
\begin{notations}
$n\geq 2$ is an integer and $p, q$ are prime numbers. $\Lambda_n$ stands for the group $GL_n(\Z)$ of invertible matrices of size $n$ with integer coefficients, whose unity element is the identity matrix $I_n$. If $a_1,\dots,a_n$ are real numbers then $\textnormal{diag}(a_1,\dots,a_n)$ denotes the diagonal matrix of size $n$ with $a_1,\dots,a_n$ as diagonal coefficients. The following double $\Lambda_n$ cosets will occur throughout this article:
\begin{eqnarray*}
\pi_i^{(n)}(p) & \coloneqq & \Lambda_n D_i^{(n)}(p)\Lambda_n,\quad D_i^{(n)}(p)=\textnormal{diag}\left(1,\dots,1,\underbrace{p,\dots,p}_{i \textnormal{terms}}\right), \\
\pi^{(n)}(p) & \coloneqq & \Lambda_n D^{(n)}(p)\Lambda_n,\quad D^{(n)}(p)=\textnormal{diag}\left(1,\underbrace{p,\dots,p}_{n-2 \textnormal{ terms}},p^2\right), \\
\pi_{i,j}^{(n)}(p)& \coloneqq & \Lambda_n D_{i,j}^{(n)}(p)\Lambda_n,\quad D_{i,j}^{(n)}(p)=\textnormal{diag}\left(1,\dots,1,\underbrace{p,\dots,p}_{i \text{terms}},\underbrace{p^2,\dots,p^2}_{j \text{ terms}}\right)
\end{eqnarray*}
for $0\leq i,j\leq n$ with $i+j\leq n$. The following polynomials in $x$ will occur when computing the degrees of some relevant $\Lambda_n$ double cosets for this work:
\begin{equation*}
\varphi_r(x)\coloneqq\prod_{k=1}^r\left(x^k-1\right),\quad \varphi_0(x)=1
\end{equation*}
for $r\geq 1$. Let us denote by $\uple{d}_n(p)$ the $n$-uple $\left(1,p,\dots,\underbrace{p^{k-1}}_{\text{$k$'th term}},\dots,p^{n-2},p^n\right)$ for $2\leq k\leq n-1$. Finally, if $\mathcal{P}$ is a property then $\delta_{\mathcal{P}}$ is the Kronecker symbol, namely $1$ if $\mathcal{P}$ is satisfied and $0$ otherwise.
\end{notations}
%................................................................................
\begin{merci}%
The author would like to thank R.~Holowinsky and E.~Royer for fruitful discussions related to this work.
\par
The author's research was partially supported by a Marie Curie Intra European Fellowship within the 7th European Community Framework Programme. The grant agreement number of this project, whose acronym is ANERAUTOHI, is PIEF-GA-2009-25271. He would like to thank E.~Kowalski, ETH and its entire staff for the excellent working conditions.
\par
Last but not least, the author would like to express his gratitude to K.~Belabas for his crucial but isolated support for Analytic Number Theory among the Number Theory research team A2X (Institut de Math\'ematiques de Bordeaux, Universit\'e de Bordeaux).
\end{merci}
%...............................................................................................
\section{Background on the $GL(n)$ Hecke algebra}\label{sec_GL3}%
In this section, $n\geq 2$. The convenient references for this section are \cite{MR1349824}, \cite{MR2254662}, \cite{MR1027069}, \cite{MR0340283} and \cite{MR1291394}.
\par
Let $f$ be a $GL(n)$ Maa$\ss$ cusp form of level $1$. Such $f$ admits a Fourier expansion
\begin{multline}\label{eq_fourier}
f(g)=\sum_{\gamma\in U_{n-1}(\Z)\setminus SL_{n-1}(\Z)}\sum_{\substack{m_1,\dots,m_{n-2}\geq 1 \\
m_{n-1}\in\Z^\ast}}\frac{a_f\left(m_1,\dots,m_{n-1}\right)}{\prod_{1\leq k\leq n-1}\abs{m_k}^{k(n-k)/2}} \\
W_{\text{Ja}}\left(\text{diag}\left(m_1\dots m_{n-2}\abs{m_{n-1}},\dots,m_1m_2,m_1,1\right)\begin{pmatrix}
\gamma & \\
& 1
\end{pmatrix}g,\nu_f,\psi_{\underbrace{1,\dots,1}_{n-2 \text{ terms}},\frac{m_{n-1}}{\abs{m_{n-1}}}}\right)
\end{multline}
for $g\in GL_n(\R)$ (see \cite[Equation (9.1.2)]{MR2254662}. Here $U_{n-1}(\Z)$ stands for the $\Z$-points of the group of upper-triangular unipotent matrices of size $n-1$. $\nu_f\in\C^{n-1}$ is the type of $f$, whose components are complex numbers characterized by the property that, for every invariant differential operator $D$ in the center of the universal enveloping algebra of $GL_n(\R)$, the cusp form $f$ is an eigenfunction of $D$ with the same eigenvalue as the power function $I_{\nu_f}$, which is defined in \cite[Equation (5.1.1)]{MR2254662}. $\psi_{\underbrace{1,\dots,1}_{n-2 \text{ terms}},\pm 1}$ is the character of the group of upper-triangular unipotent real matrices of size $n$ defined by
\begin{equation*}
\psi_{\underbrace{1,\dots,1}_{n-2 \text{ terms}},\pm 1}(u)=e^{2i\pi\left(u_{1,2}+\dots+u_{n-2,n-1}\pm u_{n-1,n}\right)}.
\end{equation*}
for $u=\left[u_{i,j}\right]_{1\leq i,j\leq n}$. $W_{Ja}\left(\ast,\nu_f,\psi_{\underbrace{1,\dots,1}_{n-2 \text{ terms}},\pm 1}\right)$ stands for the $GL(n)$ Jacquet Whittaker function of type $\nu_f$ and character $\psi_{\underbrace{1,\dots,1}_{n-2 \text{ terms}},\pm 1}$ defined in \cite[Equation 6.1.2]{MR2254662}. The complex number $a_f(m_1,\dots,m_{n-1})$ is the $(m_1,\dots,m_{n-1})$'th \emph{Fourier coefficient} of $f$ for $m_1,\dots,m_{n-2}$ some positive integers and $m_{n-1}$ a non-vanishing integer.
\par
For $g\in GL_n(\Q)$, one knows (see \cite[Lemma 1.2 Page 94 and Lemma 2.1 Page 105]{MR1349824}) that the $\Lambda_n$ double coset $\Lambda_ng\Lambda_n$ is a finite union of $\Lambda_n$ right cosets such that it makes sense to define the \emph{Hecke operator} $T_g$ by
\begin{equation*}
T_g(f)(h)=\sum_{\delta\in\Lambda_n\setminus\Lambda_n g\Lambda_n}f(\delta h)
\end{equation*}
for $h\in GL_n(\R)$ (see \cite[Chapter 3, Sections 1.1 and 1.5]{MR1349824}. The \emph{degree} of $g$ or $T_g$ is defined by
\begin{equation*}
\text{deg}(g)=\text{deg}(T_g)=\text{card}\left(\Lambda_n\setminus\Lambda_n g\Lambda_n\right).
\end{equation*}
Obviously,
\begin{equation}\label{eq_dilate}
\text{deg}(rg)=\text{deg}(g).
\end{equation}
for $r\in\Q^\times$. By \cite[Lemma 2.18 Page 114]{MR1349824},
\begin{equation}\label{eq_main_degree}
\text{deg}\left(D_{i,j}^{(n)}(p)\right)=p^{j(n-i-j)}\frac{\varphi_n(p)}{\varphi_{n-i-j}(p)\varphi_{i}(p)\varphi_{j}(p)}
\end{equation}
for $0\leq i,j\leq n$ with $i+j\leq n$. The adjoint of $T_g$ for the Peterson inner product is $T_{g^{-1}}$. The algebra of Hecke operators $\mathbb{T}$ is the ring of endomorphisms generated by all the $T_g$'s with $g\in GL_n(\Q)$, a commutative algebra of normal endomorphisms (see \cite[Theorem 9.3.6]{MR2254662}), which contains the $m$'th normalised Hecke operator
\begin{equation*}
T_m=\frac{1}{m^{(n-1)/2}}\sum_{\substack{g=\text{diag}(y_1,y_2,y_3) \\
y_1\mid y_2\mid\dots\mid y_n\\
y_1y_2\dots y_n=m}}T_g
\end{equation*}
for all positive integer $m$. A \emph{Hecke-Maa$\ss$ cusp form} $f$ of level $1$ is a Maa$\ss$ cusp form of level $1$, which is an eigenfunction of $\mathbb{T}$. In particular, it satisfies
\begin{equation}\label{eq_Hecke}
T_m(f)=a_f(m,\underbrace{1,\dots,1}_{n-2 \text{ terms}})f \text{ and } T_m^\ast(f)=a_f(\underbrace{1,\dots,1}_{n-2 \text{ terms}},m)f
\end{equation}
according to \cite[Theorem 9.3.11]{MR2254662}.
\par
The algebra $\mathbb{T}$ is isomorphic to the \emph{absolute Hecke algebra}, the free $\mathbb{Z}$-module generated by the double cosets $\Lambda_ng\Lambda_n$ where $g$ ranges over $\Lambda_n\setminus GL_n(\Q)/\Lambda_n$ and endowed with the following multiplication law. If $g_1$ and $g_2$ belong to $GL_n(\Q)$ and
\begin{equation*}
\Lambda_ng_1\Lambda_n=\bigcup_{i=1}^{\text{deg}(g_1)}\Lambda_n\alpha_i \text{ and } \Lambda_ng_2\Lambda_n=\bigcup_{j=1}^{\text{deg}(g_2)}\Lambda_n\beta_j 
\end{equation*}
then
\begin{equation}\label{eq_product}
\Lambda_ng_1\Lambda_n\ast\Lambda_ng_2\Lambda_n=\sum_{\Lambda_nh\Lambda_n\subset\Lambda_ng_1\Lambda_ng_2\Lambda_n}m(g_1,g_2;h)\Lambda_nh\Lambda_n
\end{equation}
where $h\in GL_n(\Q)$ ranges over a system of representatives of the $\Lambda_n$-double cosets contained in the set $\Lambda_ng_1\Lambda_ng_2\Lambda_n$ and
\begin{align}\label{eq_multi}
m(g_1,g_2;h) & =\text{card}\left(\left\{(i,j)\in\{1,\dots,\text{deg}(g_1)\}\times\{1,\dots,\text{deg}(g_2)\}, \alpha_i\beta_j\in\Lambda_nh\right\}\right), \\
& =\frac{1}{\text{deg}(h)}\text{card}\left(\left\{(i,j)\in\{1,\dots,\text{deg}(g_1)\}\times\{1,\dots,\text{deg}(g_2)\}, \alpha_i\beta_j\in\Lambda_nh\Lambda_n\right\}\right), \\
& =\frac{\text{deg}(g_2)}{\text{deg}(h)}\text{card}\left(\left\{i\in\{1,\dots,\text{deg}(g_1)\}, \alpha_ig_2\in\Lambda_nh\Lambda_n\right\}\right).
\end{align}
Confer \cite[Lemma 1.5 Page 96]{MR1349824}. In particular,
\begin{equation}\label{eq_ex_multi}
\Lambda_nrI_n\Lambda_n\ast\Lambda_n g\Lambda_n=\Lambda_n rg\Lambda_n
\end{equation}
for $g\in GL_n(\Q)$ and $r\in\Q^\times$ (\cite[Lemma 2.4 Page 107]{MR1349824}).
\par
For $g\in GL_n(\Q)$ with integer coefficients, the $\Lambda_n$ right coset $\Lambda_ng$ contains a unique upper-triangular column reduced matrix, namely
\begin{equation}\label{eq_Hermite}
\Lambda_ng=\Lambda_nC
\end{equation}
where $C=\left[c_{i,j}\right]_{1\leq i,j\leq n}$ is an upper-triangular matrix with integer coefficients satisfying
\begin{equation*}
\forall j\in\{2,\dots,n\},\forall i\in\{1,j-1\},\quad 0\leq c_{i,j}<c_{j,j}
\end{equation*}
by \cite[Lemma 2.7]{MR1349824}.
\par
Let $g$ be a matrix of size $n$ with integer coefficients. Let $1\leq k\leq n$. Let $I_{n,k}$ be the set of all k-tuples $\{i_1,\dots,i_n\}$ satisfying $1\leq i_1<i_2<\dots<i_k\leq n$. Obviously, $I_{n,k}$ is of cardinal $\binom{n}{k}$. If $\omega$ and $\tau$ are two elements of $I_{n,k}$ then $g(\omega,\tau)$ will denote the $k\times k$ determinantal minor of $g$ whose row indices are the elements of $\omega$ and whose column indices are the elements of $\tau$. Obviously, there are $\binom{n}{k}^2$ such minors. The $k$'th \emph{determinantal divisor} of $g$, say $d_k(g)$, is the non-negative integer defined by
\begin{equation}\label{eq_def_dk}
d_k(g)=\begin{cases}
0 & \text{if $\forall(\omega,\tau)\in I_{n,k}^2, g(\omega,\tau)=0$,} \\
gcd_{(\omega,\tau)\in I_{n,k}^2}g(\omega,\tau) & \text{otherwise}
\end{cases}
\end{equation}
and the \emph{determinantal vector} of $g$ is $\uple{d}_n(g)=\left(d_1(g),\dots,d_n(g)\right)$. The determinantal divisors turn out to be usefull since if $h$ is another matrix of size $n$ with integer coefficients then
\begin{equation}
h\in\Lambda_ng\Lambda_n \text{ if and only if } \uple{d}(h)=\uple{d}(g)
\end{equation}
according to \cite{MR0340283}.
\par
By \cite[Proposition 2.5 Page 107]{MR1349824}, if $g_1,g_2$ belong to $GL_n(\Q)$ with integer coefficients then
\begin{equation}\label{eq_milti_33}
\Lambda_ng_1\Lambda_n\ast\Lambda_ng_2\Lambda_n=\Lambda_ng_1g_2\Lambda_n
\end{equation}
provided $d_1(g_1)=d_1(g_2)=1$ and $(d_n(g_1),d_n(g_2))=1$.
\par
Finally, we will use the following result on the local integral Hecke algebra at the prime $p$, say $\underline{H}_p^n$, defined as the $\Lambda_n$ double cosets $\Lambda_ng\Lambda_n$, where $g$ ranges over the matrices in $GL_n\left(\Z[1/p]\right)$ with integer coefficients. By \cite[Lemma 2.16 Page 112]{MR1349824}, the $\Q$-linear map $\Psi:\underline{H}_p^{n}\to\underline{H}_p^{n-1}$ defined by
\begin{equation}\label{eq_psi}
\Psi\left(\Lambda_n\text{diag}\left(p^{\delta_1},\dots,p^{\delta_n}\right)\Lambda_n\right)=\begin{cases}
\Lambda_n\text{diag}\left(p^{\delta_2},\dots,p^{\delta_n}\right)\Lambda_n & \text{if $0=\delta_1\leq\delta_2\leq\dots\leq\delta_n$,} \\
0 & \text{otherwise}
\end{cases}
\end{equation}
is a morphism of rings.
%.................................................................................................
\section{Decomposition of $\pi^{(n)}(p)$ into $\Lambda_n$ right cosets}\label{sec_dec}%
In this section, $n\geq 2$. The main purpose of this section is to find a convenient complete system of representatives for the distinct $\Lambda_n$ right cosets of $\pi^{(n)}(p)$ modulo $\Lambda_n$. Let us denote by $R_0^{(n)}(p)$ the set of upper-triangular matrices $C=\left[c_{i,j}\right]_{1\leq i,j\leq n}$ of size $n$ with integer coefficients satisfying
\begin{equation}\label{eq_0_1}
\uple{d}_n(C)=\uple{d}_n(p),
\end{equation}
\begin{equation}\label{eq_0_2}
\forall i\in\left\{1,\dots,n\right\},\quad c_{i,i}=p,
\end{equation}
and
\begin{equation}\label{eq_0_3}
\forall j\in\left\{2,\dots,n\right\}, \forall i\in\left\{1,\dots,j-1\right\},\quad 0\leq c_{i,j}<p.
\end{equation}
Let us also denote by $R_1^{(n)}(p)$ the set of upper-triangular matrices $C=\left[c_{i,j}\right]_{1\leq i,j\leq n}$ of size $n$ with integer coefficients satisfying
\begin{equation}\label{eq_1_1}
\forall i\in\left\{1,\dots,n\right\},\quad c_{i,i}\in\left\{1,p,p^2\right\}, 
\end{equation}
\begin{equation}\label{eq_1_2}
\exists!i\in\left\{1,\dots,n\right\},\quad c_{i,i}=1\quad\text{ and }\quad\exists!i\in\left\{1,\dots,n\right\},\quad c_{i,i}=p^2,
\end{equation}
\begin{equation}\label{eq_1_3}
\forall j\in\left\{2,\dots,n\right\}, \forall i\in\left\{1,\dots,j-1\right\},\quad 0\leq c_{i,j}<c_{j,j}
\end{equation}
and
\begin{equation}\label{eq_1_4}
\forall i\in\left\{1,\dots,n-1\right\}, p\mid c_{i,i}\Rightarrow\forall j\in\left\{i+1,\dots,n\right\},\quad p\mid c_{i,j}.
\end{equation}
\begin{proposition}\label{propo_representatives}
Let $n\geq 2$. The set $R^{(n)}(p)=R_0^{(n)}(p)\sqcup R_1^{(n)}(p)$ is a complete system of representatives of the distinct $\Lambda_n$ right cosets of $\pi^{(n)}(p)$ modulo $\Lambda_n$. In other words,
\begin{equation*}
\pi^{(n)}(p)=\left(\bigsqcup_{C_0\in R_0^{(n)}(p)}\Lambda_nC_0\right)\bigsqcup\left(\bigsqcup_{C_1\in R_1^{(n)}(p)}\Lambda_nC_1\right).
\end{equation*}
In addition,
\begin{eqnarray*}
\text{card}\left(R_0^{(n)}(p)\right) & = & \frac{(n-1)p^{n}-np^{n-1}+1}{p-1}, \\
\text{card}\left(R_1^{(n)}(p)\right) & = & \frac{p^{2n}-np^{n+1}+2(n-1)p^n-np^{n-1}+1}{(p-1)^2}.
\end{eqnarray*}
\end{proposition}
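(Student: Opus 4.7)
The plan is to realize every $\Lambda_n$ right coset contained in $\pi^{(n)}(p)$ by its unique upper-triangular column-reduced representative and to enumerate such representatives via their determinantal divisors. By \eqref{eq_Hermite}, every $\Lambda_n$ right coset has a unique column-reduced upper-triangular representative $C=[c_{i,j}]$, and the characterization of $\Lambda_n$ double cosets recalled after \eqref{eq_def_dk} says that such a $C$ lies in $\pi^{(n)}(p)$ if and only if $\uple{d}_n(C)=\uple{d}_n(p)=(1,p,p^2,\dots,p^{n-2},p^n)$. The proposition therefore reduces to listing all such $C$.

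To constrain the diagonal, I would observe that $\det(C)=p^n$ and $C$ is upper-triangular with positive diagonal, so $c_{i,i}=p^{\delta_i}$ with $\sum_i\delta_i=n$. The principal $(n-1)\times(n-1)$ minor obtained by striking row and column $i$ is $\prod_{j\neq i}p^{\delta_j}=p^{n-\delta_i}$, so the divisibility $p^{n-2}=d_{n-1}(C)\mid p^{n-\delta_i}$ forces $\delta_i\leq 2$. Setting $a=\#\{i:\delta_i=0\}$ and $c=\#\{i:\delta_i=2\}$, the sum $\sum_i\delta_i=n$ yields $a=c$, while the $a\times a$ principal minor on indices with $\delta_i=0$ equals $1$, so $d_a(C)=1$; this contradicts $d_a(C)=p^{a-1}$ whenever $a\geq 2$. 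Hence $(a,c)\in\{(0,0),(1,1)\}$, which are precisely the two diagonal patterns defining $R_0^{(n)}(p)$ and $R_1^{(n)}(p)$.

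The second step is to check that the remaining defining constraints of $R_0^{(n)}(p)$ and $R_1^{(n)}(p)$ are necessary and sufficient for $\uple{d}_n(C)=\uple{d}_n(p)$. For the diagonal pattern of $R_0^{(n)}(p)$, writing $C=pI_n+N$ with $N$ strictly upper-triangular with entries in $\{0,\dots,p-1\}$, the condition is equivalent to the reduction of $N$ modulo $p$ having rank exactly one in $M_n(\mathbb{F}_p)$: the $p$-adic valuations of the elementary divisors of $C$ over $\Z_p$ are nonnegative, nondecreasing, summing to $n$, and bounded by $2$, and the pattern $(0,1,\dots,1,2)$ is achieved if and only if $\textnormal{rank}(C\bmod p)=1$. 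For the diagonal pattern of $R_1^{(n)}(p)$, an analogous analysis at $p$ shows that \eqref{eq_1_4} is exactly the local divisibility needed for the Smith normal form of $C$ over $\Z_p$ to match $\textnormal{diag}(1,p,\dots,p,p^2)$: rows where $p\mid c_{i,i}$ must already be divisible by $p$ above the diagonal to avoid producing a spurious unit in a lower determinantal divisor.

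The cardinalities then follow by direct enumeration. Rank-one strictly upper-triangular matrices in $M_n(\mathbb{F}_p)$ factor as $vw^T$ modulo $\mathbb{F}_p^\times$-scaling with $\max\{i:v_i\neq 0\}<\min\{j:w_j\neq 0\}$; summing $(p-1)p^{k-1+n-l}$ over $1\leq k<l\leq n$ gives $(p-1)\sum_{s=0}^{n-2}(s+1)p^s=\frac{(n-1)p^n-np^{n-1}+1}{p-1}$, the claimed $\textnormal{card}(R_0^{(n)}(p))$. For $R_1^{(n)}(p)$, fixing the positions $k$ of the $1$ and $l$ of the $p^2$ on the diagonal, a column-by-column count using \eqref{eq_1_3} and \eqref{eq_1_4} yields $p^{n+l-k-1}$ possibilities in both sub-cases $k<l$ and $k>l$, and the double sum over pairs $(k,l)$ with $k\neq l$ evaluates to the claimed formula. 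The principal obstacle will be the Smith-normal-form analysis of the second step, in particular showing that the somewhat ad hoc looking constraint \eqref{eq_1_4} is both necessary and sufficient for $\uple{d}_n(C)=\uple{d}_n(p)$ in the $R_1$-case; this is the arithmetic heart of the proof and requires either a careful inventory of the $k\times k$ minors of $C$ for all $1\leq k\leq n-1$ or a direct diagonalisation of $C$ over $\Z_p$.
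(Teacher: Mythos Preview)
Your plan is sound and the pieces you carry out in full are correct. The overall strategy coincides with the paper's: list the column-reduced upper-triangular representatives of the right cosets in $\pi^{(n)}(p)$ using the determinantal-divisor characterisation of $\Lambda_n$-double cosets. There are, however, two places where your argument diverges from the paper's, and one place where your acknowledged gap is easier to close than you suggest.

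\emph{Diagonal analysis.} Your use of the $(n-1)\times(n-1)$ and $a\times a$ principal minors to force $\delta_i\leq 2$ and then $a=c\leq 1$ is cleaner than what the paper does. The paper bounds $\delta_i\leq 2$ by observing that $p^2C^{-1}=p^2\lambda_2^{-1}D^{(n)}(p)^{-1}\lambda_1^{-1}$ has integer entries, and then invokes a small combinatorial lemma (Lemma~\ref{lemma_induc_trivial}) to pin down the pattern. Your minor-based argument avoids that detour.

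\emph{Counting $R_0^{(n)}(p)$.} You compute $\lvert R_0^{(n)}(p)\rvert$ directly, via the rank-one characterisation of $C\bmod p$ and the parametrisation of strictly upper-triangular rank-one matrices over $\mathbb{F}_p$; this is correct and gives the closed form without appeal to anything external. The paper takes the opposite route: it computes $\lvert R_1^{(n)}(p)\rvert$ first (by the same column-by-column count you sketch, arriving at the same $p^{n+l-k-1}$), and then obtains $\lvert R_0^{(n)}(p)\rvert$ by subtraction from the already-known value $\deg(D^{(n)}(p))=p(p^{n-1}-1)(p^n-1)/(p-1)^2$. Your approach is self-contained; the paper's is shorter but relies on the degree formula \eqref{eq_main_degree}.

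\emph{The step you flag as the obstacle.} The necessity of \eqref{eq_1_4} does not require a full inventory of $k\times k$ minors or a Smith diagonalisation over $\Z_p$. The paper dispatches it in a few lines: since $\uple{d}_n(C)=\uple{d}_n(p)$, the matrix $C$ has rank~$1$ modulo $p$. If \eqref{eq_1_4} failed, say $p\mid c_{i_0,i_0}$ but $p\nmid c_{i_0,j_0}$ for some $j_0>i_0$, then $c_{j_0,j_0}\neq 1$; letting $j_1$ be the unique index with $c_{j_1,j_1}=1$, the columns $C[j_0]$ and $C[j_1]$ are linearly independent over $\mathbb{F}_p$ (look at rows $i_0$ and $j_1$), contradicting rank~$1$. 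Conversely, once \eqref{eq_1_4} holds one factors $C_1=\textnormal{diag}(p^{\delta_1},\dots,p^{\delta_n})C_1'$ with $C_1'\in\Lambda_n$, so $C_1\in\pi^{(n)}(p)$ immediately. This is the argument you were looking for, and it is considerably lighter than the Smith-normal-form analysis you anticipated.
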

\begin{proof}[\proofname{} of Proposition \ref{propo_representatives}]%
By \eqref{eq_1_4}, all the matrices $C_1$ in $R_1^{(n)}(p)$ can be decomposed into
\begin{equation*}
C_1=\text{diag}\left(p^{\alpha_1},\dots,p^{\alpha_n}\right)C_1^\prime
\end{equation*}
for some non negative integers $\alpha_1,\dots,\alpha_n$ and with $C_1^\prime\in\Lambda_n$ such that
\begin{equation*}
C_1\in\Lambda\text{diag}\left(p^{\alpha_1},\dots,p^{\alpha_n}\right)\Lambda=\pi^{(n)}(p)
\end{equation*}
by \eqref{eq_1_1} and \eqref{eq_1_2}.
\par
All the matrices $C_0$ in $R_0^{(n)}(p)$ belong to $\pi^{(n)}(p)$ since their determinantal vectors match the determinantal vector of $D^{(n)}(p)$ by \eqref{eq_0_1}.
\par
All the matrices in $R^{(n)}(p)$ are upper-triangular column reduced matrices by \eqref{eq_0_3}, \eqref{eq_1_3} and belong to different $\Lambda_n$ right cosets according to the unicity statement given in \eqref{eq_Hermite}.
\par
Let $C=\left[c_{i,j}\right]_{1\leq i,j\leq n}$ be any upper-triangular column reduced matrix that lies in $\pi^{(n)}(p)$ and let us prove that $C$ belongs to $R^{(n)}(p)$. First of all, the determinant of $C$ is $p^n$ such that
\begin{equation*}
\forall i\in\left\{1,\dots,n\right\},\exists\alpha_i\in\mathbb{N},\quad c_{i,i}=p^{\alpha_i}.
\end{equation*}
Then, $C=\lambda_1D^{(n)}(p)\lambda_2$ with $\lambda_1,\lambda_2$ in $\Lambda_n$, which entails that $C^{-1}=\lambda_2^{-1}D^{(n)}(p)^{-1}\lambda_1^{-1}$. As a consequence, $p^2C^{-1}$ has integer coefficients and
\begin{equation*}
\forall i\in\left\{1,\dots,n\right\},\quad \alpha_i\in\{0,1,2\}.
\end{equation*}
If all the diagonal coefficients of $C$ are equal to $p$ then $C$ belongs to $R_0^{(n)}(p)$ since its determinantal vector must me equal to the determinantal vector of $D^{(n)}(p)$, namely $\uple{d}_n(p)$. Assume that one of its diagonal coefficient is not equal to $p$. The condition $d_2(C)=p$ implies that there must be at most one diagonal coefficient of $C$ equal to $1$. Let us prove that $C$ has a single diagonal coefficient equal to $1$ and a single coefficient equal to $p^2$. Let $\sigma$ be the permutation of $\{1,\dots,n\}$ satisfying
\begin{equation*}
0\leq\alpha_{\sigma(1)}\leq\dots\leq\alpha_{\sigma(n)}\leq 2.
\end{equation*}
The determinant condition is
\begin{equation*}
\alpha_{\sigma(1)}+\dots+\alpha_{\sigma(n)}=n.
\end{equation*}
If $\alpha_{\sigma(1)}=0$ then $\alpha_{\sigma(2)}=\dots=\alpha_{\sigma(n-1)}=1$ and $\alpha_{\sigma(n)}=2$ by Lemma \ref{lemma_induc_trivial}. If $\alpha_{\sigma(1)}\geq 1$ then all the diagonal coefficients of $C$ are equal to $p$ by Lemma \ref{lemma_induc_trivial}, which is a contradiction. Thus, \eqref{eq_1_2} is satisfied. Let us prove \eqref{eq_1_4}. Assume on the contrary that there exist $i_0$ in $\{1,\dots,n-1\}$ and $j_0$ in $\{i_0+1,\dots,n\}$ such that $p\mid c_{i_0,i_0}$ and $p\nmid c_{i_0,j_0}$. The fact that $p\nmid c_{i_0,j_0}$ implies that $c_{j_0,j_0}\neq 1$. Let $j_1\neq j_0$ be the index of the column of $C$, for which $c_{j_1,j_1}=1$. Let us prove that the columns $C[j_1]$ of $C$ of index $j_1$ and $C[j_0]$ of $C$ of index $j_0$ are linearly independent modulo $p$. If
\begin{equation*}
0=\lambda_0C[j_0]+\lambda_1C[j_1]
\end{equation*}
then the $i_0$'th component implies that
\begin{equation*}
0=\lambda_0c_{i_0,j_0}+\lambda_1c_{i_0,j_1}=\lambda_0c_{i_0,j_0}
\end{equation*}
such that $\lambda_0=0$ since $c_{i_0,j_0}$ is invertible modulo $p$ and $\lambda_1=0$. This is a contradiction since $C$ is of rank $1$ modulo $p$. Thus, $C$ belongs to $R_1^{(n)}(p)$.
\par
Let us compute the cardinality of $R_1^{(n)}(p)$. Obviously,
\begin{equation*}
\text{card}\left(R_1^{(n)}(p)\right)=p^{n-1}\sum_{1\leq\alpha_1<\alpha_2\leq n}p^{\alpha_2-\alpha_1}+p^{n-1}\sum_{1\leq\alpha_2<\alpha_1\leq n}p^{\alpha_2-\alpha_1}.
\end{equation*}
A straightforward computation ensures that
\begin{equation*}
\sum_{1\leq a<b\leq n}x^{a-b}=\frac{(n-1)x^n-nx^{n-1}+1}{x^{n-1}(x-1)^2}
\end{equation*}
for all real number $x\neq 1$, from which the computation of this cardinality follows, with $x=p$ in the first sum and $x=1/p$ in the second one.
\par
Let us compute the cardinality of $R_0^{(n)}(p)$. Obviously,
\begin{align*}
\text{card}\left(R_0^{(n)}(p)\right) & =\text{card}\left(R^{(n)}(p)\right)-\text{card}\left(R_1^{(n)}(p)\right) \\
& =\text{deg}\left(D^{(n)}(p)\right)-\text{card}\left(R_1^{(n)}(p)\right) \\
& =p\frac{\varphi_n(p)}{\varphi_1(p)^2\varphi_{n-2}(p)}-\frac{p^{2n}-np^{n+1}+2(n-1)p^n-np^{n-1}+1}{(p-1)^2} \\
& =p\frac{\left(p^{n-1}-1\right)\left(p^n-1\right)}{(p-1)^2}-\frac{p^{2n}-np^{n+1}+2(n-1)p^n-np^{n-1}+1}{(p-1)^2}
\end{align*}
by \eqref{eq_main_degree}.
\end{proof}
The following lemma, which follows from a simple induction, has been used in the previous proof.
\begin{lemma}\label{lemma_induc_trivial}
Let $n\geq 4$ be an integer and $\alpha_1,\dots,\alpha_n$ be non negative integers.
\begin{itemize}
\item
If $1\leq\alpha_1\leq\dots\leq\alpha_n\leq 2$ and $\alpha_1+\dots+\alpha_n=n$ then $\alpha_1=\dots=\alpha_n=1$.
\item
If $1\leq\alpha_2\leq\dots\leq\alpha_n\leq 2$ and $\alpha_2+\dots+\alpha_n=n$ then $\alpha_2=\dots=\alpha_{n-1}=1$ and $\alpha_n=2$.
\end{itemize}
\end{lemma}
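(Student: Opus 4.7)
The plan is to prove each bullet by a direct counting argument, leveraging the very restrictive condition that every $\alpha_i$ lies in the two-element set $\{1,2\}$. The author's phrase ``simple induction'' is one possible route, but an elementary pigeonhole-type estimate is even more economical.

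For the first bullet, since each of the $n$ integers $\alpha_1,\dots,\alpha_n$ satisfies $\alpha_i\geq 1$, the sum $\alpha_1+\dots+\alpha_n$ is at least $n$, with equality if and only if every $\alpha_i$ equals $1$. The hypothesis $\alpha_1+\dots+\alpha_n=n$ forces this equality case, so $\alpha_1=\dots=\alpha_n=1$ immediately.

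For the second bullet, I have $n-1$ integers $\alpha_2,\dots,\alpha_n$, each lying in $\{1,2\}$. Letting $k$ denote the number of indices $i\in\{2,\dots,n\}$ with $\alpha_i=2$, the sum equals $(n-1-k)\cdot 1 + k\cdot 2 = (n-1)+k$. Setting this equal to $n$ forces $k=1$, so exactly one of the $\alpha_i$'s equals $2$ and the remaining $n-2$ of them equal $1$. The monotonicity condition $\alpha_2\leq\dots\leq\alpha_n$ then pins down which index carries the $2$: it must be the largest one, hence $\alpha_n=2$ and $\alpha_2=\dots=\alpha_{n-1}=1$.

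There is no genuine obstacle. Both parts boil down to recognising that when a sum of integers bounded below by $1$ matches a given total, there is exactly one degree of freedom to distribute, and the monotonicity assumption fixes that distribution. The hypothesis $n\geq 4$ is not actually needed for the arithmetic (the argument works for all $n\geq 2$); it reflects only the context in the proof of Proposition~\ref{propo_representatives} where the lemma is invoked.
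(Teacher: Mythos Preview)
Your proof is correct. The paper does not actually write out a proof of this lemma; it only remarks that it ``follows from a simple induction''. Your direct counting argument (bounding the sum below by the number of terms, and then counting how many $2$'s are forced) is a perfectly valid alternative and, if anything, is cleaner than setting up an induction for something this elementary. Your observation that the hypothesis $n\geq 4$ plays no role in the lemma itself is also accurate.
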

We will need more details, stated in the following proposition, on the matrices in $R_0^{(n)}(p)$.
\begin{proposition}\label{propo_R0_describe}
Let $n\geq 4$ and $C_0=\left[c_{i,j}\right]_{1\leq i,j\leq n}\in R_0^{(n}(p)$. On the one hand, $C_0\neq pI_n$. On the other hand, for all positive integers $i,j,k,\ell$, one has 
\begin{eqnarray*}
1\leq i<k<j<\ell\leq n & \Rightarrow & c_{i,j}c_{k,\ell}\equiv c_{i,\ell}c_{k,j}\pmod{p} \\
1\leq i<j\leq k<\ell\leq n & \Rightarrow & c_{i,j}c_{k,\ell}=0.
\end{eqnarray*}
\end{proposition}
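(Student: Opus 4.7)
The plan is to extract everything from the single fact that $d_2(C_0) = p$, i.e.\ every $2\times 2$ minor of $C_0$ is divisible by $p$, combined with the column-reduction bound $0 \le c_{i,j} < p$ for $i < j$.

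For the first bullet, I would show directly that $pI_n \notin R_0^{(n)}(p)$: every $2 \times 2$ minor of $pI_n$ is either $0$ (when the chosen row and column index sets differ) or $p^2$ (for a principal $2\times 2$ minor), hence $d_2(pI_n) = p^2$. Since $\uple{d}_n(p)$ has second entry $p$, the matrix $pI_n$ fails the condition \eqref{eq_0_1} defining $R_0^{(n)}(p)$, so $C_0 \ne pI_n$.

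For the second bullet, I would fix $1 \le i < k < j < \ell \le n$ and consider the $2 \times 2$ minor of $C_0$ with row set $\{i,k\}$ and column set $\{j,\ell\}$. The inequalities $i < k < j < \ell$ place all four entries strictly above the diagonal, so the minor equals $c_{i,j}c_{k,\ell} - c_{i,\ell}c_{k,j}$. By $d_2(C_0) = p$, this minor is divisible by $p$, which is precisely the congruence $c_{i,j}c_{k,\ell} \equiv c_{i,\ell}c_{k,j} \pmod p$.

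For the third bullet, I would fix $1 \le i < j \le k < \ell \le n$ and look again at the $2 \times 2$ minor with rows $\{i,k\}$ and columns $\{j,\ell\}$, splitting according to whether $j < k$ or $j = k$. If $j < k$, upper-triangularity gives $c_{k,j} = 0$ and the minor equals $c_{i,j}c_{k,\ell}$; if $j = k$, then $c_{k,j} = c_{j,j} = p$ and the minor equals $c_{i,j}c_{k,\ell} - p\,c_{i,\ell}$. In either case, $d_2(C_0) = p$ forces $p \mid c_{i,j}c_{k,\ell}$. Now the inequalities $i < j$ and $k < \ell$ put both $c_{i,j}$ and $c_{k,\ell}$ in the strict upper triangle, so \eqref{eq_0_3} gives $0 \le c_{i,j}, c_{k,\ell} < p$; since $p$ is prime, $p \mid c_{i,j}c_{k,\ell}$ forces one of the factors to vanish, yielding $c_{i,j}c_{k,\ell} = 0$ in $\Z$ (not merely mod $p$).

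There is essentially no serious obstacle here, but the one point deserving care is the boundary case $j = k$ in the third bullet: the minor is no longer just the product $c_{i,j}c_{k,\ell}$, yet the extra term $p\,c_{i,\ell}$ is automatically a multiple of $p$, so the divisibility conclusion is unaffected. All three parts are thus clean consequences of $d_2(C_0) = p$ and the column-reduction bounds.
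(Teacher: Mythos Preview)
Your proof is correct and follows essentially the same route as the paper's own argument: both extract the second and third claims from $d_2(C_0)=p$ together with the column-reduction bound $0\le c_{i,j}<p$, splitting the case $1\le i<j\le k<\ell\le n$ into $j<k$ and $j=k$ exactly as you do. The only cosmetic difference is in the first claim, where the paper rules out $pI_n$ via $d_1(C_0)=1$ rather than $d_2(pI_n)=p^2$; both are immediate.
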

\begin{remark}
One can check that
\begin{eqnarray*}
R_0^{(2)}(p) & = & \bigsqcup_{0<c_{1,2}<p}\left\{\begin{pmatrix}
p & c_{1,2} \\
& p
\end{pmatrix}\right\}, \\
R_0^{(3)}(p) & = & \bigsqcup_{\substack{0\leq c_{1,2},c_{1,3},c_{2,3}<p \\
c_{1,2}c_{2,3}=0 \\
\left(c_{1,2},c_{1,3},c_{2,3}\right)\neq(0,0,0)}}\left\{\begin{pmatrix}
p & c_{1,2} & c_{1,3} \\
& p & c_{2,3} \\
& & p
\end{pmatrix}\right\}.
\end{eqnarray*}
\end{remark}
\begin{proof}[\proofname{} of Proposition \ref{propo_R0_describe}]%
The fact that $C_0\neq pI_n$ is obvious since the first determinantal divisor of $C_0$, whose value is $1$, is nothing else than the greatest common divisor of the coefficients of $C_0$, which are non-negative integers strictly less than $p$.
\par
Recall that $d_2(C_0)=p$. As a consequence, $p$ divides the determinantal minors of $C_0$ of size $2$ given by
\begin{equation}\label{eq_det2_1}
c_{i,j}c_{k,\ell}-c_{i,\ell}c_{k,j}
\end{equation}
for all $1\leq i<k<j<\ell\leq n$. It also divides the determinantal divisors of $C_0$ of size $2$ given by
\begin{equation}
c_{i,j}c_{j,\ell}-c_{i,\ell}c_{j,j}=c_{i,j}c_{j,\ell}-pc_{i,\ell}
\end{equation}
for $1\leq i<j<\ell\leq n$. The fact that the prime number $p$ divides $c_{i,j}c_{j,\ell}$implies that $c_{i,j}c_{j,\ell}=0$ because the non-diagonal coefficients of $C_0$ are non-negative and strictly less than $p$. Similarly, $p$ divides the determinantal divisors of $C_0$ of size $2$ given by
\begin{equation}
c_{i,j}c_{k,\ell}-c_{i,\ell}c_{k,j}=c_{i,j}c_{k,\ell}
\end{equation}
for $1\leq i<j<k<\ell\leq n$, such that $c_{i,j}c_{k,\ell}=0$ too.
\end{proof}
%................................................................................................
\section{End of the proof of Theorem \ref{coro_A}}\label{sec_proof_eq}%
In this section, $n\geq 4$. First of all, we need the following intermediate result.
\begin{proposition}\label{propo_R0}
Let $n\geq 4$. Let $C_0=\left[c_{i,j}\right]_{1\leq i,j\leq n}$ in $R_0^{(n)}(p)$. If
\begin{equation}\label{eq_cond}
\forall (i,j)\in\{1,\dots,n\}^2,\quad 2\leq i<j\leq n-1\Rightarrow c_{i,j}=0
\end{equation}
then
\begin{equation*}
C_0D^{(n)}(p)\in\Lambda_n\text{diag}\left(p,\underbrace{p^2,\dots,p^2}_{n-2 \text{ terms}},p^3\right)\Lambda_n.
\end{equation*}
Otherwise,
\begin{equation*}
C_0D^{(n)}(p)\in\Lambda_n\text{diag}\left(p,p,\underbrace{p^2,\dots,p^2}_{n-4 \text{ terms}},p^3,p^3\right)\Lambda_n.
\end{equation*}
In addition,
\begin{equation*}
\text{card}\left(\left\{C_0\in R_0^{(n)}(p), C_0D^{(n)}(p)\in\Lambda_n\text{diag}\left(p,\underbrace{p^2,\dots,p^2}_{n-2 \text{ terms}},p^3\right)\Lambda_n\right\}\right)=2p^{n-1}-p-1
\end{equation*}
and
\begin{multline*}
\text{card}\left(\left\{C_0\in R_0^{(n)}(p), C_0D^{(n)}(p)\in\Lambda_n\text{diag}\left(p,p,\underbrace{p^2,\dots,p^2}_{n-4 \text{ terms}},p^3,p^3\right)\Lambda_n\right\}\right) \\
=\frac{p^2\left((n-3)p^{n-2}-(n-2)p^{n-3}+1\right)}{p-1}.
\end{multline*}
\end{proposition}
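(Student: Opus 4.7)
I'd compute the $\Lambda_n$-double coset of $M\coloneqq C_0 D^{(n)}(p)$ by determining its Smith normal form, and then count via Proposition \ref{propo_R0_describe}. Since every entry of $M$ is divisible by $p$, write $M=pM'$; the entry $M'_{1,1}=1$ while the rest of column $1$ of $M'$ vanishes, so $\Lambda_n$-column operations clear row $1$ of $M'$ completely and produce the representative $I_1\oplus N$, where $N$ is the $(n-1)\times(n-1)$ upper-triangular matrix with diagonal $(p,\dots,p,p^2)$, middle off-diagonal entries $c_{i,j}$ for $2\leq i<j\leq n-1$, and last-column entries $pc_{i,n}$ for $2\leq i\leq n-1$. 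Hence $\textnormal{Smith}(M)=p\cdot\bigl[(1)\oplus\textnormal{Smith}(N)\bigr]$ and the problem reduces to computing $\textnormal{Smith}(N)$ in each of the two cases.

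If \eqref{eq_cond} holds, the middle block of $N$ is $pI_{n-2}$, so $N=p\begin{pmatrix}I_{n-2}&w\\0&p\end{pmatrix}$ with $w=(c_{2,n},\dots,c_{n-1,n})^T$; clearing $w$ by column operations produces $\textnormal{Smith}(N)=\textnormal{diag}(p,\dots,p,p^2)$ and therefore $\textnormal{Smith}(M)=\textnormal{diag}(p,p^2,\dots,p^2,p^3)$, placing $M$ in the first claimed double coset.

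If \eqref{eq_cond} fails, the decisive observation is that Proposition \ref{propo_R0_describe} forces a \emph{rectangle} structure on the nonzero middle coefficients. Any two nonzero pairs $(i_1,j_1),(i_2,j_2)$ with $2\leq i_k<j_k\leq n-1$ must satisfy $j_1>i_2$ and $j_2>i_1$—otherwise the vanishing relation $c_{i,j}c_{k,\ell}=0$ would be violated—so the integer intervals $[i_k,j_k]$ pairwise overlap and by the one-dimensional Helly property share a common integer $x$; the nonzero middle entries are therefore confined to $[2,x]\times[x+1,n-1]$, giving the block decomposition $N_0=\begin{pmatrix}pI_{x-1}&T\\0&pI_{n-1-x}\end{pmatrix}$ of the middle block of $N$. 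The quadratic congruence of Proposition \ref{propo_R0_describe} makes every $2\times 2$ minor of $T$ vanish modulo $p$, so $T$, and hence $N_0$, has $\mathbb{F}_p$-rank $1$; combined with $\det N_0=p^{n-2}$ this pins down $\textnormal{Smith}(N_0)=\textnormal{diag}(1,p,\dots,p,p^2)$. A direct computation using the vanishing $c_{i,j}c_{j,n}=0$ (Proposition \ref{propo_R0_describe} applied to $(i,j,j,n)$) shows $N_0 w=pw$, so $pw\in N_0\mathbb{Z}^{n-2}$ and the image of $N$ splits as $N\mathbb{Z}^{n-1}=N_0\mathbb{Z}^{n-2}\oplus p^2\mathbb{Z}$; consequently $\textnormal{coker}(N)=(\mathbb{Z}/p\mathbb{Z})^{n-4}\oplus(\mathbb{Z}/p^2\mathbb{Z})^2$, which yields $\textnormal{Smith}(N)=\textnormal{diag}(1,p,\dots,p,p^2,p^2)$ and $\textnormal{Smith}(M)=\textnormal{diag}(p,p,p^2,\dots,p^2,p^3,p^3)$, the second claimed double coset.

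For the cardinalities I would parametrize $C_0\in R_0^{(n)}(p)$ satisfying \eqref{eq_cond} by the $n-1$ entries $c_{1,j}$ (for $2\leq j\leq n$) and the $n-2$ entries $c_{k,n}$ (for $2\leq k\leq n-1$); Proposition \ref{propo_R0_describe} applied to all index pairs straddling row $1$ and column $n$ forces $c_{1,j}c_{k,n}=0$ for every $2\leq j,k\leq n-1$, hence either all middle $c_{1,j}$ vanish or all middle $c_{k,n}$ vanish. Inclusion–exclusion with $c_{1,n}$ free in both cases gives $2p^{n-1}-p$, and removing the unique excluded case $C_0=pI_n$ yields $2p^{n-1}-p-1$. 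Subtracting this from $\lvert R_0^{(n)}(p)\rvert$ (Proposition \ref{propo_representatives}) and simplifying gives the second count $\frac{p^2\bigl((n-3)p^{n-2}-(n-2)p^{n-3}+1\bigr)}{p-1}$. The main obstacle is the extraction of the rectangle structure: both the vanishing relations of Proposition \ref{propo_R0_describe} (which underpin the Helly argument) and the quadratic congruences (which cap the $\mathbb{F}_p$-rank of $T$ at $1$) are essential to the Smith-form computation.
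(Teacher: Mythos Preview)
Your argument is correct and takes a genuinely different route from the paper's. The paper determines the double coset of $C_0D^{(n)}(p)$ by computing each determinantal divisor $d_k$ via the Cauchy--Binet formula: after handling $d_1,d_{n-1},d_n$ directly, it analyses all size-$k$ minors for $2\le k\le n-2$ and invokes the technical Lemma~\ref{lemma_submatrices_3} to exhibit a specific minor equal to $p^{2k-2}c_{i_0,j_0}$, which is what drops $d_k$ from $p^{2k-1}$ to $p^{2k-2}$ when \eqref{eq_cond} fails. For the first cardinality the paper sets up a recursion in $n$ by splitting on whether $c_{1,2}$ and $c_{n-1,n}$ vanish. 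Your approach is more structural: factoring out $p$ and stripping off the first row/column reduces everything to the Smith form of the $(n-1)\times(n-1)$ matrix $N$; the Helly-type extraction of a rectangle block $T$ from the vanishing relations of Proposition~\ref{propo_R0_describe}, together with the congruence relations forcing $\operatorname{rk}_{\mathbb{F}_p}T=1$, then pins down $\textnormal{Smith}(N_0)$ by rank and determinant alone, and the identity $N_0w=pw$ lets you split the cokernel of $N$. This bypasses Lemma~\ref{lemma_submatrices_3} entirely and replaces the paper's minor-by-minor bookkeeping with a single rank computation; the trade-off is that your argument leans more heavily on Proposition~\ref{propo_R0_describe}. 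Your direct cardinality count via the dichotomy ``all middle $c_{1,j}$ vanish or all middle $c_{k,n}$ vanish'' is also cleaner than the paper's recursion.

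One small point worth making explicit in your write-up: when you count $2p^{n-1}-p$ matrices by inclusion--exclusion, you are counting all upper-triangular matrices with diagonal $p$, middle block zero, and satisfying the dichotomy, and you need the converse direction---that every such matrix (other than $pI_n$) actually lies in $R_0^{(n)}(p)$. This is immediate, since in either branch the matrix has $\mathbb{F}_p$-rank exactly $1$ and determinant $p^n$, hence Smith form $\textnormal{diag}(1,p,\dots,p,p^2)$; but Proposition~\ref{propo_R0_describe} only supplies necessary conditions, so the sufficiency should be stated.
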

\begin{remark}
One can check that the previous proposition remains valid when $n=3$, in which case
\begin{equation*}
C_0D^{(3)}(p)\in\Lambda_3\text{diag}\left(p,p^2,p^3\right)\Lambda_3
\end{equation*}
for all matrix $C_0\in R_0^{(3)}(p)$. When $n=2$,
\begin{equation*}
C_0D^{(2)}(p)\in\Lambda_2\text{diag}\left(p,p^3\right)\Lambda_2
\end{equation*}
for all matrix $C_0\in R_0^{(2)}(p)$. 
\end{remark}
\begin{proof}[\proofname{} of Proposition \ref{propo_R0}]%
Recall that
\begin{eqnarray*}
d_n\left(\text{diag}\left(p,\underbrace{p^2,\dots,p^2}_{n-2 \text{ terms}},p^3\right)\right) & = & \left(p,p^3,\dots,\underbrace{p^{2k-1}}_{\text{$k$'th term}},\dots,p^{2n-5},p^{2n-3},p^{2n}\right), \\
d_n\left(\text{diag}\left(p,p,\underbrace{p^2,\dots,p^2}_{n-4 \text{ terms}},p^3,p^3\right)\right) & = & \left(p,p^2,\dots,\underbrace{p^{2k-2}}_{\text{$k$'th term}},\dots,p^{2n-6},p^{2n-3},p^{2n}\right), \\
d_n(C_0) & = & d_n(p)=\left(1,p,\dots,\underbrace{p^{\ell-1}}_{\text{$\ell$'th term}},\dots,p^{n-2},p^n\right) 
\end{eqnarray*}
for $2\leq k\leq n-2$ and $2\leq\ell\leq n-1$.
\par
Obviously, $d_{1}(C_0D^{(n)}(p))=p$ and $d_n(C_0D^{(n)}(p))=p^{2n}$.
\par
Let us show that $d_{n-1}(C_0D^{(n)}(p))=p^{2n-3}$. Of course, $p^{2n-3}$ is a determinantal minor of $C_0D^{(n)}(p)$ of size $n-1$ such that it remains to show that the other determinantal minors of $C_0D^{(n)}(p)$ of size $n-1$ are all divisible by $p^{2n-3}$. Let $\omega=\{1,\dots,n\}\setminus\{i_0\}$ and $\tau=\{1,\dots,n\}\setminus\{j_0\}$ two elements in $I_{n-1,n}$ (see \eqref{eq_def_dk} for the notations used). By the Cauchy-Binet formula,
\begin{align*}
\left(C_0D^{(n)}(p)\right)\left(\omega,\tau\right) & =\sum_{\alpha\in I_{n-1,n}}C_0\left(\omega,\alpha\right)D^{(n)}(p)\left(\alpha,\tau\right) \\
& =C_0\left(\omega,\tau\right)D^{(n)}(p)\left(\tau,\tau\right)
\end{align*}
since $D^{(n)}(p)$ is a diagonal matrix. If $j_0=1$ then $C_0\left(\omega,\tau\right)$ is divisible by $p^{n-2}$, since $d_{n-1}(C_0)=p^{n-2}$, and $D^{(n)}(p)\left(\tau,\tau\right)=p^n$. If $2\leq j_0\leq n-1$ then $C_0\left(\omega,\tau\right)$ is divisible by $p^{n-2}$ and $D^{(n)}(p)\left(\tau,\tau\right)=p^{n-1}$. The only remaining case is when $j_0=n$. The minor obtained when erasing the $i_0$'th row and the $n$'th column of $C_0D^{(n)}(p)$ has its last row equal to $0$ but when $i_0=n$, in which case
\begin{equation*}
\left(C_0D^{(n)}(p)\right)\left(\omega,\tau\right)=p^{2n-3}.
\end{equation*} 
\par
Let $2\leq k\leq n-2$. Of course, $p^{2k-1}$  is a determinantal minor of $C_0D^{(n)}(p)$ of size $k$. Then, by Lemma \ref{lemma_submatrices_3}, all the integers
\begin{equation*}
p^{2k-2}c_{i,j}
\end{equation*}
for $2\leq i<j\leq n-1$ also belong to the list of determinantal minors of $C_0D^{(n)}(p)$ of size $k$. Let $\omega=\{i_1,\dots,i_k\}$ with $1\leq i_1<\dots<i_k\leq n$ and $\tau=\{j_1,\dots,j_k\}$ with $1\leq j_1<\dots<j_k\leq n$ two elements in $I_{k,n}$. Once again, by the Cauchy-Binet formula,
\begin{align*}
\left(C_0D^{(n)}(p)\right)\left(\omega,\tau\right) & =\sum_{\alpha\in I_{k,n}}C_0\left(\omega,\alpha\right)D^{(n)}(p)\left(\alpha,\tau\right) \\
& =C_0\left(\omega,\tau\right)D^{(n)}(p)\left(\tau,\tau\right) \\
& =C_0\left(\omega,\tau\right)\times\begin{cases}
p^{k+1} & \text{if $2\leq j_1<\dots<j_{k-1}\leq n-1<j_k=n$,} \\
p^k & \text{if $2\leq j_1<\dots<j_{k}\leq n-1$,} \\
p^k & \text{if $1=j_1<j_2\dots<j_{k-1}<j_k=n$,} \\
p^{k-1} & \text{if $1=j_1<j_2\dots<j_{k}\leq n-1$.}
\end{cases}
\end{align*}
$C_0\left(\omega,\tau\right)$ being divisible by $p^{k-1}$, since $d_{k}(C_0)=p^{k-1}$, all these determinantal minors are divisible by $p^{2k-1}$ except a priori when $1=j_1<j_2\dots<j_{k}\leq n-1$. Let us investigate this last case. First of all,
\begin{align*}
C_0\left(\omega,\tau\right) & =\sum_{\sigma_\in\sigma_k}\epsilon(\sigma)c_{i_{\sigma(1)},1}c_{i_{\sigma(2)},j_2}\dots c_{i_{\sigma(k)},j_k} \\
& =\sum_{\substack{\sigma_\in\sigma_k \\
i_{\sigma(1)}=1}}\epsilon(\sigma)c_{i_{\sigma(1)},1}c_{i_{\sigma(2)},j_2}\dots c_{i_{\sigma(k)},j_k} \\
& =\begin{cases}
p\sum_{\substack{\sigma_\in\sigma_k \\
\sigma(1)=1}}\epsilon(\sigma)c_{i_{\sigma(2)},j_2}\dots c_{i_{\sigma(k)},j_k} & \text{if $i_1=1$,} \\
0 & \text{otherwise}
\end{cases}
\end{align*}
where $\sigma_k$ stands for the permutation group on $k$ letters and since the condition $i_{\sigma(1)}=1$ is equivalent to $i_1=\sigma(1)=1$. We can focus on the case $i_1=1$, in which case
\begin{equation*}
C_0(\omega,\tau)=\sum_{L=0}^{k-1}p^{1+L}\sum_{\substack{\sigma_\in\sigma_k \\
\sigma(1)=1 \\
\forall\ell\in\{2,\dots,k\}, i_{\sigma(\ell)}\leq j_\ell \\
\text{card}\left(\left\{\ell\in\{2,\dots,k\}, i_{\sigma(l)}=j_\ell\right\}\right)=L}}\epsilon(\sigma)\prod_{\substack{2\leq\ell\leq k \\
i_{\sigma(\ell)}\neq j_\ell}}c_{i_{\sigma(\ell)},j_\ell}
\end{equation*}
is a polynomial in a subset of 
\begin{equation*}
c_{i,j},\quad 2\leq i<j\leq n-1
\end{equation*}
divisible by $p^{k-1}$, since $d_{k}(C_0)=p^{k-1}$, whose constant term is divisible by $p^k$. One can now conclude as follows. If \eqref{eq_cond} holds then $d_k\left(C_0D^{(n)}(p)\right)$ is the greatest common divisor of $0$, $p^{2k-1}$ and of a finite list of integers divisible by $p^{2k-1}$ such that
\begin{equation*}
d_k\left(C_0D^{(n)}(p)\right)=p^{2k-1}.
\end{equation*}
If \eqref{eq_cond} does not hold then $d_k\left(C_0D^{(n)}(p)\right)$ is the greatest common divisors of $p^{2k-1}$, of the integers $p^{2k-2}c_{i,j}$, $2\leq i<j\leq n-1$, and of a finite list of integers divisible by $p^{2k-2}$ such that
\begin{equation*}
d_k\left(C_0D^{(n)}(p)\right)=p^{2k-2}.
\end{equation*}
\par
Let us compute the first cardinality, say $c_0^{(n)}(p)$ , given in the previous proposition. The set
\begin{equation*}
\left\{C_0\in R_0^{(n)}(p), \forall (i,j)\in\{1,\dots,n\}^2,\quad 2\leq i<j\leq n-1\Rightarrow c_{i,j}=0\right\}
\end{equation*}
can be decomposed into the disjoint union of the three following sets.
\begin{itemize}
\item
The set of matrices $C_0$ in $R_0^{(n)}(p)$ satisfying \eqref{eq_cond} and $c_{1,2}\neq 0$, $c_{n-1,n}=0$, which implies that
\begin{equation*}
c_{2,n}=\dots=c_{n-2,n}=0.
\end{equation*}
There are $(p-1)p^{n-2}$ such matrices.
\item
The set of matrices $C_0$ in $R_0^{(n)}(p)$ satisfying \eqref{eq_cond} and $c_{1,2}=0$, $c_{n-1,n}\neq 0$, which implies that
\begin{equation*}
c_{1,3}=\dots=c_{1,n-1}=0.
\end{equation*}
There are $(p-1)p^{n-2}$ such matrices.
\item
The set of matrices $C_0$ in $R_0^{(n)}(p)$ satisfying \eqref{eq_cond} and $c_{1,2}=c_{n-1,n}=0$, which can be identified to the set of matrices $C_0$ in $R_0^{(n-1)}(p)$ satisfying \eqref{eq_cond}, by erasing the diagonal of zeros above the main diagonal. There are $c_0^{(n-1)}(p)$ such matrices.
\end{itemize}
In total,
\begin{equation*}
c_0^{(n)}(p)=2(p-1)p^{n-2}+c_0^{(n-1)}(p).
\end{equation*}
One can conclude by induction on $n\geq 4$. If the formula holds for $n\geq 4$ then
\begin{equation*}
c_0^{(n+1)}(p)=2(p-1)p^{n-1}+2p^{n-1}-p-1=2p^n-p-1.
\end{equation*}
Let us briefly check that $c_0^{(4)}(p)=2p^3-p-1$. If $C_0$ in $R_0^{(4)}(p)$ satisfies \eqref{eq_cond} then five cases can occur.
\begin{itemize}
\item
$c_{1,2}=c_{1,3}=c_{1,4}=c_{2,4}=0$ and $c_{3,4}\neq 0$. There are $p-1$ such matrices.
\item
$c_{1,2}=c_{1,3}=c_{1,4}=0$ and $c_{2,4}\neq 0$. There are $p(p-1)$ such matrices.
\item
$c_{1,2}=c_{1,3}=0$ and $c_{1,4}\neq 0$. There are $p^2(p-1)$ such matrices.
\item
$c_{1,2}=c_{2,4}=c_{3,4}=0$ and $c_{1,3}\neq 0$. There are $p(p-1)$ such matrices.
\item
$c_{2,4}=c_{3,4}=0$ and $c_{1,2}\neq 0$. There are $p^2(p-1)$ such matrices.
\end{itemize}
\par
The computation of the second cardinality is a consequence of Proposition \ref{propo_representatives}, which gives the cardinal of $R_0^{(n)}(p)$.
\end{proof}
The following lemma has been used in the previous proof.
\begin{lemma}\label{lemma_submatrices_3}
Let $n\geq 4$ and $2\leq k\leq n-2$. Let $C=\left[c_{i,j}\right]_{1\leq i,j\leq n}$ be an upper-triangular matrix with integer coefficients satisfying
\begin{equation}\label{eq_rec_1}
\forall i\in\{1,\dots,n\},\quad c_{i,i}=p
\end{equation}
and
\begin{equation}\label{eq_rec_2}
1\leq i<j\leq k<\ell\leq n\Rightarrow c_{i,j}c_{k,\ell}=0
\end{equation}
for all positive integer $i,j,k,\ell$. Let
\begin{equation}\label{eq_index}
2\leq i_0<j_0\leq n-1.
\end{equation}
Then, there exists $\omega_{i_0,j_0}$, $\tau_{i_0,j_0}$ in $I_{k,n}$ and $\epsilon_{i_0,j_0}=\pm 1$ such that
\begin{equation*}
\left(CD^{(n)}(p)\right)\left(\omega_{i_0,j_0},\tau_{i_0,j_0}\right)=\epsilon_{i_0,j_0}p^{2k-2}c_{i_0,j_0}.
\end{equation*}
\end{lemma}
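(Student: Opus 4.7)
The plan is to apply the Cauchy--Binet formula, which for $\omega,\tau\in I_{k,n}$ gives $(CD^{(n)}(p))(\omega,\tau)=C(\omega,\tau)\cdot D^{(n)}(p)(\tau,\tau)$ since $D^{(n)}(p)$ is diagonal. As $D^{(n)}(p)=\textnormal{diag}(1,p,\ldots,p,p^{2})$, the factor $D^{(n)}(p)(\tau,\tau)$ equals $p^{k-1}$ precisely when $1\in\tau$ and $n\notin\tau$, so I arrange this and then aim to establish $C(\omega,\tau)=\pm p^{k-1}c_{i_{0},j_{0}}$.

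The construction goes as follows. Pick any $(k-2)$-subset $T\subseteq\{2,\ldots,n-1\}\setminus\{i_{0},j_{0}\}$, which exists because $|\{2,\ldots,n-1\}\setminus\{i_{0},j_{0}\}|=n-4\geq k-2$ by the hypothesis $k\leq n-2$. Set $\omega=\{1,i_{0}\}\cup T$ and $\tau=\{1,j_{0}\}\cup T$, both written in increasing order; note $1\in\tau$ and $n\notin\tau$ as required. Partition $T=T_{<}\sqcup T_{\mathrm{mid}}\sqcup T_{>}$ according to whether each element is less than $i_{0}$, strictly between $i_{0}$ and $j_{0}$, or greater than $j_{0}$, with respective cardinalities $a,b,c$ satisfying $a+b+c=k-2$. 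A direct inspection shows that $C(\omega,\tau)$ is block upper-triangular with blocks $A,M,F$ of sizes $a+1,b+1,c$ on the block-diagonal: both $A$ and $F$ are upper-triangular with $p$'s on their diagonals (inherited from $c_{i,i}=p$ and the upper-triangularity of $C$), giving $\det A=p^{a+1}$, $\det F=p^{c}$; the below-block-diagonal zeros come entirely from the upper-triangularity of $C$.

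The crux is evaluating $\det M$. Listing $T_{\mathrm{mid}}=\{t_{1}<\cdots<t_{b}\}$, the block $M$ has its rows indexed by $(i_{0},t_{1},\ldots,t_{b})$ and its columns by $(t_{1},\ldots,t_{b},j_{0})$, which yields an upper Hessenberg matrix with first row $(c_{i_{0},t_{1}},\ldots,c_{i_{0},t_{b}},c_{i_{0},j_{0}})$, $p$'s on the first subdiagonal, zeros strictly below, and last column $(c_{i_{0},j_{0}},c_{t_{1},j_{0}},\ldots,c_{t_{b},j_{0}})^{\top}$. I claim $\det M=(-1)^{b}p^{b}c_{i_{0},j_{0}}$, proved by induction on $b\geq 0$. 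The base $b=0$ is immediate as $M=(c_{i_{0},j_{0}})$. For the inductive step, expand along the first column, whose only nonzero entries are $c_{i_{0},t_{1}}$ in row $1$ and $p$ in row $2$; the two resulting cofactors are themselves of the same Hessenberg shape but smaller, obtained by shrinking $T_{\mathrm{mid}}$ to $\{t_{2},\ldots,t_{b}\}$, with $i_{0}$ replaced by $t_{1}$ in the first case and retained in the second. The inductive hypothesis evaluates them to $(-1)^{b-1}p^{b-1}c_{t_{1},j_{0}}$ and $(-1)^{b-1}p^{b-1}c_{i_{0},j_{0}}$ respectively, giving $\det M=(-1)^{b-1}p^{b-1}(c_{i_{0},t_{1}}c_{t_{1},j_{0}}-p\,c_{i_{0},j_{0}})$. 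The obstructing cross-term $c_{i_{0},t_{1}}c_{t_{1},j_{0}}$ vanishes by hypothesis \eqref{eq_rec_2} applied to $i_{0}<t_{1}\leq t_{1}<j_{0}$, and $\det M=(-1)^{b}p^{b}c_{i_{0},j_{0}}$ follows.

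Assembling everything, $C(\omega,\tau)=p^{a+c+1}\det M=(-1)^{b}p^{k-1}c_{i_{0},j_{0}}$, hence $(CD^{(n)}(p))(\omega,\tau)=(-1)^{b}p^{2k-2}c_{i_{0},j_{0}}$ and $\epsilon_{i_{0},j_{0}}=(-1)^{b}$ is an admissible sign. The main technical obstacle is the inductive step: one must carefully verify that, after deleting row/column $1$ of $M$, the row/column labelings of $\omega$ and $\tau$ propagate so as to reproduce the Hessenberg template for the smaller instance, and check that \eqref{eq_rec_2} kills the single surviving cross-term at each level. Everything else is bookkeeping on the block structure of $C(\omega,\tau)$.
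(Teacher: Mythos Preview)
Your proof is correct. The index sets $\omega=\{1,i_0\}\cup T$ and $\tau=\{1,j_0\}\cup T$ you construct are exactly the ones the paper chooses (the paper phrases it as picking $\omega_0,\tau_0$ with $\{i_2,\dots,i_k\}\setminus\{i_0\}=\{j_2,\dots,j_k\}\setminus\{j_0\}$, which is your $T$), so the setup is the same. The difference lies in how the minor $C(\omega,\tau)$ is computed. The paper expands by the Leibniz formula over $\sigma_{k-1}$, isolates the unique permutation $\sigma_0$ giving $\epsilon(\sigma_0)p^{k-1}c_{i_0,j_0}$, and then argues term by term that every other permutation contributes a product containing a factor $c_{a,b}c_{b,c}$ with $a<b<c$, which vanishes by \eqref{eq_rec_2}. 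You instead observe that $C(\omega,\tau)$ is block upper-triangular, reduce to the $(b+1)\times(b+1)$ Hessenberg block $M$, and compute $\det M$ by a short induction via cofactor expansion along the first column; the single cross-term $c_{i_0,t_1}c_{t_1,j_0}$ (and its analogues at deeper levels) is killed by the special case $j=k$ of \eqref{eq_rec_2}. Your route is more structured and makes transparent why only the ``diagonal'' contribution survives, at the cost of having to verify the block shape; the paper's route avoids the block bookkeeping but requires a more delicate permutation chase. Both use \eqref{eq_rec_2} only in the form $c_{a,b}c_{b,c}=0$.
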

\begin{proof}[\proofname{} of Lemma \ref{lemma_submatrices_3}]%
Let $\omega_0=\{1,i_2,\dots,i_k\}$ with $2\leq i_2$ and $\tau_0=\{1,j_2,\dots,j_k\}$ with $2\leq j_2$ and $j_k\leq n-1$ in $I_{k,n}$ satisfying
\begin{equation*}
\exists(u_0,v_0)\in\{2,\dots,k\}^2,\quad (i_{u_0},j_{v_0})=(i_0,j_0)
\end{equation*}
and
\begin{equation*}
\forall\ell\in\{2,\dots,k\}\setminus\{v_0\},\quad i_\ell=j_{\varphi_0(\ell)}
\end{equation*}
for some permutation $\varphi_0$ in $\sigma_{k-2}$. Such a choice is possible by \eqref{eq_index}. Recall the Cauchy-Binet formula stated in the proof of Proposition \ref{propo_R0}, namely
\begin{align}\label{eq_Cauchy_Binet}
\left(CD^{(n)}(p)\right)\left(\omega_0,\tau_0\right) & =p^{k}\sum_{\sigma\in\sigma_{k-1}}\epsilon(\sigma)c_{i_{\sigma(2)},j_2}\dots c_{i_{\sigma(v_0)},j_{v_0}}\dots c_{i_{\sigma(k)},j_k} \\
 & =p^{k}\sum_{\sigma\in\sigma_{k-1}}\epsilon(\sigma)c_{j_{\varphi_0(\sigma(2))},j_2}\dots c_{i_{\sigma(v_0)},j_{v_0}}\dots c_{j_{\varphi_0(\sigma(k))},j_k}
\end{align}
\par
Obviously, the contribution to the previous sum of the permutation $\sigma_0$ in $\sigma_{k-1}$ given by $\sigma_0(v_0)=u_0$ and
\begin{equation*}
\sigma_0(\ell)=\varphi_0^{-1}(\ell)
\end{equation*}
for $\ell$ in $\{2,\dots,k\}\setminus\{v_0\}$ equals
\begin{equation*}
\epsilon(\sigma_0)p^{2k-2}c_{i_0,j_0}
\end{equation*}
by \eqref{eq_rec_1}.
\par
Let us show that all the other terms vanish. Let $\sigma\neq\sigma_0$ in $\sigma_{k-1}$. One can assume that $i_{\sigma(v_0)}\leq j_{v_0}=j_0$ and
\begin{equation*}
j_{\varphi_0(\sigma(\ell))}\leq j_\ell
\end{equation*}
for $\ell\in\{2,\dots,k\}\setminus\{v_0\}$ since otherwise, the contribution trivially vanishes, $C$ being upper-triangular. This immediately implies that
\begin{equation*}
j_{\varphi_0(\sigma(\ell))}=j_\ell \text{ and } \sigma(\ell)=\varphi_0^{-1}(\ell)
\end{equation*}
for $2\leq\ell\leq v_0-1$ since $2\leq j_2<\dots<j_{v_0}=j_0<\dots<j_k$. Thus, by \eqref{eq_rec_1}, the contribution of $\sigma$ equals
\begin{equation*}
p^{k+v_0-2}\epsilon(\sigma)c_{i_{\sigma(v_0)},j_{v_0}}c_{j_{\varphi_0(\sigma(v_0+1))},j_{v_0+1}}\dots c_{j_{\varphi_0(\sigma(k))},j_k}.
\end{equation*}
Now, $\sigma$ being different from $\sigma_0$, there exists $\ell\in\{v_0+1,\dots,k\}$ satisfying $j_{\varphi_0(\sigma(\ell))}<j_\ell$. Let $\ell_0$ be the minimum of these integers $\ell$. One has
\begin{equation*}
j_{\varphi_0(\sigma(\ell))}=j_\ell \text{ and } \sigma(\ell)=\varphi_0^{-1}(\ell)
\end{equation*}
for $2\leq\ell\in\{2,\dots,\ell_0-1\}\setminus\{v_0\}$ since $2\leq j_2<\dots<j_{v_0}=j_0<\dots<j_k$ such that
\begin{equation*}
j_{\ell_0}>j_{\varphi_0(\sigma(\ell_0))}\Rightarrow j_{\varphi_0(\sigma(\ell_0))}=j_{v_0}. 
\end{equation*}
Consequently, the contribution of $\sigma$ equals
\begin{equation*}
p^{k+\ell_0-3}\epsilon(\sigma)c_{i_{\sigma(v_0)},j_{v_0}}c_{j_{v_0},j_{\ell_0}}\dots c_{j_{\varphi_0(\sigma(k))},j_k}=0
\end{equation*}
by \eqref{eq_rec_2}.
\end{proof}
Let us now complete the proof of Theorem \ref{coro_A}.
\begin{proof}[\proofname{} of Theorem \ref{coro_A}]%
By \eqref{eq_product},
\begin{equation*}
\pi^{(n)}(p)\ast\pi^{(n)}(p)=\sum_{\Lambda_n h\Lambda_n\subset\pi^{(n)}(p)\pi^{(n)}(p)}m_n(h;p)\Lambda_n h\Lambda_n
\end{equation*}
where $h\in GL_n(\Q)$ ranges over a system of representatives of the $\Lambda_n$ right cosets contained in the set
\begin{equation*}
\pi^{(n)}(p)\pi^{(n)}(p)
\end{equation*}
and
\begin{eqnarray*}
m_n(h;p) & \coloneqq & \frac{\text{deg}\left(D^{(n)}(p)\right)}{\text{deg}(h)}c_n(h;p), \\
c_n(h;p) & \coloneqq & \text{card}\left(\left\{C\in R^{(n)}(p),CD^{(n)}(p)\in\pi^{(n)}(p)\right\}\right).
\end{eqnarray*}
Recall that
\begin{equation}\label{eq_deg_fund}
\text{deg}\left(D^{(n)}(p)\right)=p\frac{\varphi_n(p)}{\varphi_1(p)^2\varphi_{n-2}(p)}=p\frac{\left(p^{n-1}-1\right)\left(p^n-1\right)}{(p-1)^2}
\end{equation}
by \eqref{eq_main_degree}.
\par
Let us determine the different matrices $h$ occuring in this decomposition.
\par
If $C_1$ in $R_1^{(n)}(p)$ then we have already seen that
\begin{equation*}
C_1=\text{diag}\left(p^{\delta_1},\dots,p^{\delta_n}\right)C_1^\prime
\end{equation*}
with $C_1^\prime$ an upper-triangular matrix in $\Lambda_n$ and $0\leq\delta_1,\dots,\delta_n\leq 2$ with
\begin{equation*}
\text{card}\left(\left\{i\in\{1,\dots,n\}, \delta_i=1\right\}\right)=\text{card}\left(\left\{i\in\{1,\dots,n\}, \delta_i=2\right\}\right)=1.
\end{equation*}
As a consequence,
\begin{align*}
C_1D^{(n)}(p) & =\text{diag}\left(p^{\delta_1},\underbrace{p^{1+\delta_2},\dots,p^{1+\delta_{n-1}}}_{n-2 \text{ terms}},p^{2+\delta_n}\right)D^{(n)}(p)^{-1}C_1^\prime D^{(n)}(p) \\
& \in \Lambda_n\text{diag}\left(p^{\delta_1},\underbrace{p^{1+\delta_2},\dots,p^{1+\delta_{n-1}}}_{n-2 \text{ terms}},p^{2+\delta_n}\right)\Lambda_n
\end{align*}
since $D^{(n)}(p)^{-1}C_1^\prime D^{(n)}(p)$ belongs to $\Lambda_n$. Let $1\leq\alpha_1\neq\alpha_2\leq n$ the integers satisfying
\begin{equation*}
\delta_{\alpha_1}=1\quad\text{ and }\quad\delta_{\alpha_2}=2.
\end{equation*}
Let us list the different cases that can occur. \\
\noindent{\underline{First case}}: $\alpha_1=1$ and $2\leq\alpha_2\leq n-1$. In this case, one has
\begin{equation*}
C_1D^{(n)}(p)\in\Lambda_n\text{diag}\left(1,\underbrace{p^{2},\dots,p^{2}}_{n-3 \text{ terms}},p^{3},p^3\right)\Lambda_n.
\end{equation*}
The number of such matrices $C_1$ is
\begin{equation}\label{eq_c_1}
\sum_{2\leq\alpha_2\leq n-1}p^{n+\alpha_2-2}=p^n\frac{p^{n-2}-1}{p-1}.
\end{equation}
\noindent{\underline{Second case}}: $\alpha_1=1$ and $\alpha_2=n$. In this case, one has
\begin{equation*}
C_1D^{(n)}(p)\in\Lambda_n\text{diag}\left(1,\underbrace{p^{2},\dots,p^{2}}_{n-2 \text{ terms}},p^4\right)\Lambda_n.
\end{equation*}
The number of such matrices $C_1$ is
\begin{equation}\label{eq_c_2}
p^{2n-2}.
\end{equation}
\noindent{\underline{Third case}}: $2\leq\alpha_1\leq n-1$ and $\alpha_2=1$. In this case, one has
\begin{equation*}
C_1D^{(n)}(p)\in\Lambda_n\text{diag}\left(p,\underbrace{p^{2},\dots,p^{2}}_{n-2 \text{ terms}},p^{3}\right)\Lambda_n.
\end{equation*}
The number of such matrices $C_1$ is
\begin{equation}\label{eq_c_3}
\sum_{2\leq\alpha_1\leq n-1}p^{n-\alpha_1}=p\frac{p^{n-2}-1}{p-1}.
\end{equation}
\noindent{\underline{Fourth case}\footnote{Note that this case does not occur if $n<4$.}}: $2\leq\alpha_1\neq\alpha_2\leq n-1$. In this case, one has
\begin{equation*}
C_1D^{(n)}(p)\in\Lambda_n\text{diag}\left(p,p,\underbrace{p^{2},\dots,p^{2}}_{n-4 \text{ terms}},p^{3},p^3\right)\Lambda_n.
\end{equation*}
The number of such matrices $C_1$ is
\begin{multline}\label{eq_c_4}
\sum_{2\leq\alpha_1<\alpha_2\leq n-1}p^{n-1+\alpha_2-\alpha_1}+\sum_{2\leq\alpha_2<\alpha_1\leq n-1}p^{n-1+\alpha_2-\alpha_1} \\
=\frac{p^2\left(p^{2(n-2)}-(n-2)p^{n-1}+2(n-3)p^{n-2}-(n-2)p^{n-3}+1\right)}{(p-1)^2}
\end{multline}
since it is straightforward to check that
\begin{equation*}
\sum_{2\leq\alpha_1<\alpha_2\leq n-1}x^{\alpha_2-\alpha_1}=\frac{x\left(x^{n-2}-(n-2)x+n-3\right)}{(x-1)^2}
\end{equation*}
for all real number $x\neq 1$. \\
\noindent{\underline{Fifth case}}: $2\leq\alpha_1\leq n-1$ and $\alpha_2=n$. In this case, one has
\begin{equation*}
C_1D^{(n)}(p)\in\Lambda_n\text{diag}\left(p,p,\underbrace{p^{2},\dots,p^{2}}_{n-3 \text{ terms}},p^4\right)\Lambda_n.
\end{equation*}
The number of such matrices $C_1$ is
\begin{equation}\label{eq_c_5}
\sum_{2\leq\alpha_1\leq n-1}p^{2n-1-\alpha_1}=p^n\frac{p^{n-2}-1}{p-1}.
\end{equation}
\noindent{\underline{Sixth case}}: $\alpha_1=n$ and $\alpha_2=1$. In this case, one has
\begin{equation*}
C_1D^{(n)}(p)\in\Lambda_n\text{diag}\left(\underbrace{p^{2},\dots,p^{2}}_{n \text{ terms}}\right)\Lambda_n=\Lambda_np^2I_n\Lambda_n.
\end{equation*}
The number of such matrices $C_1$ is
\begin{equation}\label{eq_c_6}
1.
\end{equation}
\noindent{\underline{Seventh case}}: $\alpha_1=n$ and $2\leq\alpha_2\leq n-1$. In this case, one has
\begin{equation*}
C_1D^{(n)}(p)\in\Lambda_n\text{diag}\left(p,\underbrace{p^{2},\dots,p^{2}}_{n-2 \text{ terms}},p^3\right)\Lambda_n.
\end{equation*}
The number of such matrices $C_1$ is
\begin{equation}\label{eq_c_7}
\sum_{2\leq\alpha_2\leq n-1}p^{\alpha_2-1}=p\frac{p^{n-2}-1}{p-1}.
\end{equation}
\par
If $C_0$ in $R_0^{(n)}(p)$ then two cases can occur by Proposition \ref{propo_R0}. \\
\noindent{\underline{Eighth case}}: $\forall (i,j)\in\{1,\dots,n\}^2, 2\leq i<j\leq n\Rightarrow c_{i,j}=0$.
In this case,
\begin{equation*}
C_0D^{(n)}(p)\in\Lambda_n\text{diag}\left(p,\underbrace{p^2,\dots,p^2}_{n-2 \text{ terms}},p^3\right)\Lambda_n
\end{equation*}
and the number of such matrices is
\begin{equation}\label{eq_c_8}
2p^{n-1}-p-1.
\end{equation}
\noindent{\underline{Nineth case}}: $\exists (i,j)\in\{1,\dots,n\}^2, 2\leq i<j\leq n \text{ and } c_{i,j}\neq 0$. In this case,
\begin{equation*}
C_0D^{(n)}(p)\in\Lambda_n\text{diag}\left(p,p,\underbrace{p^2,\dots,p^2}_{n-4 \text{ terms}},p^3,p^3\right)\Lambda_n
\end{equation*}
and the number of such matrices is
\begin{equation}\label{eq_c_9}
\frac{p^2\left((n-3)p^{n-2}-(n-2)p^{n-3}+1\right)}{p-1}.
\end{equation}
\par
In particular, we have just proved that
\begin{multline}\label{eq_inter}
\pi^{(n)}(p)\ast\pi^{(n)}(p)=m_n(1;p)\Lambda_np^2I_n\Lambda_n+m_n(2;p)\Lambda_n\text{diag}\left(p,\underbrace{p^2,\dots,p^2}_{n-2 \text{ terms}},p^3\right)\Lambda_n \\
+m_n(3;p)\Lambda_n\text{diag}\left(1,\underbrace{p^2,\dots,p^2}_{n-3 \text{ terms}},p^3,p^3\right)\Lambda_n+m_n(4;p)\Lambda_n\text{diag}\left(1,\underbrace{p^2,\dots,p^2}_{n-2 \text{ terms}},p^4\right)\Lambda_n \\
+m_n(5;p)\Lambda_n\text{diag}\left(p,p,\underbrace{p^2,\dots,p^2}_{n-3 \text{ terms}},p^4\right)\Lambda_n+m_n(6;p)\Lambda_n\text{diag}\left(p,p,\underbrace{p^2,\dots,p^2}_{n-4 \text{ terms}},p^3,p^3\right)\Lambda_n.
\end{multline}
where
\begin{eqnarray*}
m_n(1;p) & \coloneqq & m_n\left(p^2I_n;p\right), \\
m_n(2;p) & \coloneqq & m_n\left(\text{diag}\left(p,\underbrace{p^2,\dots,p^2}_{n-2 \text{ terms}},p^3\right);p\right), \\
m_n(3;p) & \coloneqq & m_n\left(\text{diag}\left(1,\underbrace{p^2,\dots,p^2}_{n-3 \text{ terms}},p^3,p^3\right);p\right)
\end{eqnarray*}
and
\begin{eqnarray*}
m_n(4;p) & \coloneqq & m_n\left(\text{diag}\left(1,\underbrace{p^2,\dots,p^2}_{n-2 \text{ terms}},p^4\right);p\right), \\
m_n(5;p) & \coloneqq & m_n\left(\text{diag}\left(p,p,\underbrace{p^2,\dots,p^2}_{n-3 \text{ terms}},p^4\right);p\right), \\
m_n(6;p) & \coloneqq & m_n\left(\text{diag}\left(p,p,\underbrace{p^2,\dots,p^2}_{n-4 \text{ terms}},p^3,p^3\right);p\right).
\end{eqnarray*}
\par
One has,
\begin{align*}
m_n(1;p) & =\frac{\text{deg}\left(D^{(n)}(p)\right)}{\text{deg}(p^2I_n)}c_n\left(p^2I_n;p\right) \\
& =p\frac{\left(p^{n-1}-1\right)\left(p^n-1\right)}{(p-1)^2}
\end{align*}
by \eqref{eq_deg_fund} and \eqref{eq_c_6} since $\text{deg}(p^2I_n)=1$.
\par
Then,
\begin{align*}
m_n(2;p) & =\frac{\text{deg}\left(D^{(n)}(p)\right)}{\text{diag}\left(p,\underbrace{p^2,\dots,p^2}_{n-2 \text{ terms}},p^3\right)}c_n\left(\text{diag}\left(p,\underbrace{p^2,\dots,p^2}_{n-2 \text{ terms}},p^3\right);p\right) \\
& =c_n\left(\text{diag}\left(p,\underbrace{p^2,\dots,p^2}_{n-2 \text{ terms}},p^3\right);p\right) \\
& =2p\frac{p^{n-2}-1}{p-1}+2p^{n-1}-p-1 \\
& =\frac{2p^n-p^2-2p+1}{p-1}
\end{align*}
by \eqref{eq_dilate}, \eqref{eq_c_3}, \eqref{eq_c_7}, \eqref{eq_c_8}.
\par
Let us compute simultaneously the values of $m_n(3;p)$ and $m_n(4;p)$. On the one hand, applying the map $\Psi$ (see \eqref{eq_psi}) to \eqref{eq_inter}, one gets
\begin{multline*}
\pi_{n-2,1}^{(n-1)}(p)\ast\pi_{n-2,1}^{(n-1)}(p)=m_n(3;p)\Lambda_n\text{diag}\left(\underbrace{p^2,\dots,p^2}_{n-3 \text{ terms}},p^3,p^3\right)\Lambda_n \\
+m_n(4;p)\Lambda_n\text{diag}\left(\underbrace{p^2,\dots,p^2}_{n-2 \text{ terms}},p^4\right)\Lambda_n.
\end{multline*}
On the other hand, by \cite[Lemma 2.18 Page 115]{MR1349824}, one gets
\begin{align*}
\pi_{n-2,1}^{(n-1)}(p)\ast\pi_{n-2,1}^{(n-1)}(p) & =\Lambda_np^2I_n\Lambda_n\ast\pi_{1}^{(n-1)}(p)\ast\pi_{1}^{(n-1)}(p) \\
& =\Lambda_np^2I_n\Lambda_n\ast\left(\pi_{0,1}^{(n-1)}(p)+(p+1)\pi_{2,0}^{(n-1)}(p)\right) \\
& =\Lambda_n\text{diag}\left(\underbrace{p^2,\dots,p^2}_{n-2 \text{ terms}},p^4\right)\Lambda_n \\
& +(p+1)\Lambda_n\text{diag}\left(\underbrace{p^2,\dots,p^2}_{n-3 \text{ terms}},p^3,p^3\right)\Lambda_n
\end{align*}
by \eqref{eq_ex_multi}. Distinct $\Lambda_n$ double cosets being linearly independent by \cite[Lemma 1.5 Page 96]{MR1349824}, we get
\begin{eqnarray*}
m_n(3;p) & = & p+1, \\
m_n(4;p) & = & 1.
\end{eqnarray*}
Then,
\begin{align*}
\text{deg}\left(\text{diag}\left(1,\underbrace{p^2,\dots,p^2}_{n-3 \text{ terms}},p^3,p^3\right)\right) & =\frac{\text{deg}\left(D^{(n)}(p)\right)}{m_n(3;p)}c_n\left(\text{diag}\left(1,\underbrace{p^2,\dots,p^2}_{n-3 \text{ terms}},p^3,p^3\right);p\right) \\
& =p^{n+1}\frac{\left(p^{n-2}-1\right)\left(p^{n-1}-1\right)\left(p^{n}-1\right)}{(p-1)^2\left(p^2-1\right)}
\end{align*}
by \eqref{eq_deg_fund} and \eqref{eq_c_1}. Similarly,
\begin{align*}
\text{deg}\left(\text{diag}\left(1,\underbrace{p^2,\dots,p^2}_{n-2 \text{ terms}},p^4\right)\right) & =\frac{\text{deg}\left(D^{(n)}(p)\right)}{m_n(4;p)}c_n\left(\text{diag}\left(1,\underbrace{p^2,\dots,p^2}_{n-2 \text{ terms}},p^4\right);p\right) \\
& =p^{2n-1}\frac{\left(p^{n-1}-1\right)\left(p^{n}-1\right)}{(p-1)^2}
\end{align*}
by \eqref{eq_deg_fund} and \eqref{eq_c_2}.
\par
Let us consider $m_n(5;p)$. Firstly, let us compute the value of
\begin{equation*}
\text{deg}\left(\text{diag}\left(p,p,\underbrace{p^2,\dots,p^2}_{n-3 \text{ terms}},p^4\right)\right)=\text{deg}\left(\text{diag}\left(1,1,\underbrace{p,\dots,p}_{n-3 \text{ terms}},p^3\right)\right)
\end{equation*}
by \eqref{eq_dilate}. This is done by a semi-explicit computation of
\begin{equation*}
\pi_{n-2}^{(n)}(p)\ast\pi_{0,1}^{(n)}(p)=\sum_{\Lambda_nh\Lambda_n\subset\pi_{n-2}^{(n)}(p)\pi_{0,1}^{(n)}(p)}m\left(D_{n-2}^{(n)}(p),D_{0,1}^{(n)}(p);h\right)\Lambda_nh\Lambda_n
\end{equation*}
where $h\in GL_n(\Q)$ ranges over a system of representatives of the $\Lambda_n$ right cosets contained in the set
\begin{equation*}
\pi_{n-2}^{(n)}(p)\pi_{0,1}^{(n)}(p)
\end{equation*}
and
\begin{equation*}
m\left(D_{n-2}^{(n)}(p),D_{0,1}^{(n)}(p);h\right)=\frac{\text{deg}\left(D_{0,1}^{(n)}(p)\right)}{\text{deg}(h)}\text{card}\left(\left\{C\in R_{1,1,\underbrace{p,\dots,p}_{n-2}}, CD_{0,1}^{(n)}(p)\in\Lambda_nh\Lambda_n\right\}\right)
\end{equation*}
where $R_{1,1,\underbrace{p,\dots,p}_{n-2}}$ is the complete system of representatives for the distinct $\Lambda_n$ right cosets of $\pi_{n-2}^{(n)}(p)$ modulo $\Lambda_n$ given by the set of upper-triangular column reduced matrices $C$ satisfying
\begin{equation}\label{eq_expos}
\forall i\in\left\{1,\dots,n\right\},\quad c_{i,i}\in\{1,p\},
\end{equation}
\begin{equation}\label{eq_coef_1}
\text{card}\left(\left\{i\in\{1,\dots,n\}, c_{i,i}=1\right\}\right)=2
\end{equation}
and
\begin{equation}\label{eq_divis}
\forall i\in\left\{1,\dots,n-1\right\}, p\mid c_{i,i}\Rightarrow\forall j\in\left\{i+1,\dots,n\right\},\quad c_{i,j}=0
\end{equation}
according to \cite[Lemma 2.18 Page 115]{MR1349824}. Let $C$ be an element of $R_{1,1,\underbrace{p,\dots,p}_{n-2}}$ and let $1\leq\alpha_1<\alpha_2\leq n$ be the indices of the diagonal elements of $C$ equal to $1$ by \eqref{eq_coef_1}. By \eqref{eq_expos} and \eqref{eq_divis}, $C$ can be decomposed into
\begin{equation*}
C=\text{diag}\left(p^{\delta_1},\dots,p^{\delta_n}\right)C^\prime
\end{equation*}
for some upper-triangular matrix $C^\prime$ in $\Lambda_n$ and integers $0\leq\delta_1,\dots,\delta_n\leq 1$ such that
\begin{equation*}
CD_{0,1}^{(n)}(p)\in\begin{cases}
\Lambda_n\text{diag}\left(1,1,\underbrace{p,\dots,p}_{n-3 \text{ terms}},p^3\right)\Lambda_n & \text{if $1\leq\alpha_1<\alpha_2\leq n-1$} \\
\pi_{n-2,1}^{(n)}(p) & \text{if $1\leq\alpha_1<\alpha_2=n$.}
\end{cases}
\end{equation*}
Thus,
\begin{multline*}
\pi_{n-2}^{(n)}(p)\ast\pi_{0,1}^{(n)}(p)=m\left(D_{n-2}^{(n)}(p),D_{0,1}^{(n)}(p);D_{n-2,1}^{(n)}(p)\right)\pi_{n-2,1}^{(n)}(p) \\
+m\left(D_{n-2}^{(n)}(p),D_{0,1}^{(n)}(p);\text{diag}\left(1,1,\underbrace{p,\dots,p}_{n-3 \text{ terms}},p^3\right)\right)\Lambda_n\text{diag}\left(1,1,\underbrace{p,\dots,p}_{n-3 \text{ terms}},p^3\right)\Lambda_n.
\end{multline*}
Applying the map $\Psi^{\circ 2}$ (see \eqref{eq_psi}) to the previous equality, one gets
\begin{multline*}
\Lambda_n\text{diag}\left(\underbrace{p,\dots,p}_{n-3 \text{ terms}},p^3\right)\Lambda_n \\
=m\left(D_{n-2}^{(n)}(p),D_{0,1}^{(n)}(p);\text{diag}\left(\underbrace{p,\dots,p}_{n-3 \text{ terms}},p^3\right)\right)\Lambda_n\text{diag}\left(\underbrace{p,\dots,p}_{n-3 \text{ terms}},p^3\right)\Lambda_n
\end{multline*}
such that
\begin{equation*}
m\left(D_{n-2}^{(n)}(p),D_{0,1}^{(n)}(p);\text{diag}\left(\underbrace{p,\dots,p}_{n-3 \text{ terms}},p^3\right)\right)=1
\end{equation*}
by the linear independence of distinct $\Lambda_n$ double cosets (\cite[Lemma 1.5 Page 96]{MR1349824}). As a consequence,
\begin{align*}
\text{deg}\left(\text{diag}\left(1,1,\underbrace{p,\dots,p}_{n-3 \text{ terms}},p^3\right)\right) & =\text{deg}\left(D_{0,1}^{(n)}(p)\right)\sum_{1\leq\alpha_1<\alpha_2\leq n-1}p^{2n-1-\alpha_1-\alpha_2} \\
& =p^{n-2}\frac{\varphi_{n}(p)}{\varphi_{n-1}(p)\varphi_{1}(p)}p^{2n-1}\sum_{1\leq\alpha_1<\alpha_2\leq n-1}\left(\frac{1}{p}\right)^{\alpha_1+\alpha_2} \\
& =p^{n-2}\frac{\varphi_{n}(p)}{\varphi_{n-1}(p)\varphi_{1}(p)}p^{2n-4}\frac{\varphi_{n-1}(1/p)}{\varphi_2(1/p)\varphi_{n-3}(1/p)} \\
& =p^{n-2}\frac{\varphi_{n}(p)}{\varphi_{n-1}(p)\varphi_{1}(p)}p^2\frac{\varphi_{n-1}(p)}{\varphi_2(p)\varphi_{n-3}(p)} \\
& =p^{n+1}\frac{\varphi_{n}(p)}{\varphi_{1}(p)\varphi_{2}(p)\varphi_{n-3}(p)}
\end{align*}
by \eqref{eq_deg_fund}, \cite[Equation (2.33) Page 115]{MR1349824} and since
\begin{equation*}
\varphi_r(1/x)=(-1)^rx^{-r(r+1)/2}\varphi_r(x)
\end{equation*}
for $r\geq 1$ and $x\neq 0$ a real number. As a consequence,
\begin{align*}
m_n(5;p) & =\frac{\text{deg}\left(D^{(n)}(p)\right)}{\text{diag}\left(p,p,\underbrace{p^2,\dots,p^2}_{n-3 \text{ terms}},p^4\right)}c_n\left(\text{diag}\left(p,p,\underbrace{p^2,\dots,p^2}_{n-3 \text{ terms}},p^4\right);p\right) \\
& =\frac{\varphi_2(p)}{\varphi_1(p)^2} \\
& =p+1
\end{align*}
by \eqref{eq_c_5}.
\par
Finally, let us compute the value of $m_n(6;p)$. One has
\begin{align*}
m_n(6;p) & =\frac{\text{deg}\left(D^{(n)}(p)\right)}{\text{deg}\left(\text{diag}\left(p,p,\underbrace{p^2,\dots,p^2}_{n-4 \text{ terms}},p^3,p^3\right)\right)}c_n\left(\text{diag}\left(p,p,\underbrace{p^2,\dots,p^2}_{n-4 \text{ terms}},p^3,p^3\right);p\right) \\
& = \frac{\text{deg}\left(D^{(n)}(p)\right)}{\text{deg}\left(D_{n-4,2}^{(n)}(p)\right)}c_n\left(\text{diag}\left(p,p,\underbrace{p^2,\dots,p^2}_{n-4 \text{ terms}},p^3,p^3\right);p\right) \\
& =\frac{\left(p+1\right)^2(p-1)^2}{p^3\left(p^{n-2}-1\right)\left(p^{n-3}-1\right)}\frac{p^3\left(p^{2n-5}-p^{n-2}-p^{n-3}+1\right)}{(p-1)^2} \\
& =(p+1)^2
\end{align*}
by \eqref{eq_dilate}, \eqref{eq_main_degree}, \eqref{eq_c_4} and \eqref{eq_c_9}.
\end{proof}
%............................................................................................
\bibliographystyle{alpha}
\bibliography{biblio}
%.......................................
\end{document}